 \theoremstyle{plain}
 \newtheorem{theorem}{Theorem}
 \newtheorem{lemma}{Lemma}
 \newtheorem{proposition}{Proposition}
 \newtheorem{case}{Case}
 \newtheorem{corollary}{Corollary}
 \theoremstyle{definition}
 \newtheorem{definition}{Definition}
 \theoremstyle{remark}
  \newtheorem{remark}{Remark}
\begin{document}

\title{\emph{Stationary coupling method} for renewal process in continuous time
\\
{\normalsize \emph{application to strong bounds for the convergence rate of the distribution of the regenerative process}}}


\author{G.~Zverkina\thanks{The author is supported by the RFBR, project No 17-01-00633 A.}
}

\maketitle

\begin{abstract}
We propose a new modification of the coupling method for renewal process in continuous time. We call this modification \emph{``the stationary coupling method''}, and construct it primarily to obtain the bounds for convergence rate of the distribution of the regenerative processes in the total variation metrics.
At the same time this modification of the coupling method demonstrates an improvement of the classical result of  polynomial convergence rate of the distribution of the regenerative process -- in the case of a heavy tail.
\\
\textbf{keywords} {Renewal process, Regenerative process, Rate of con\-ver\-gen\-ce, Coupling method }
\\
\textbf{subclass} {MSC 60B10 \and MSC 60J25 \and MSC 60K15}
\end{abstract}

\section{Introduction: Coupling method and its mo\-di\-fi\-ca\-tions}
This paper proposes a new modification of the coupling method, which we call the \emph{stationary coupling method}.

Next, we demonstrate how this method may be used to obtain the bounds for the convergence rate of the distribution of the regenerative process to the stationary distribution in the total variation metrics.

Application of the stationary coupling method demonstrates improvement of the classic results about the polynomial convergence rate of the distribution of the regenerative process.

First, we precise a description of a coupling method.

\subsection{Coupling method (see \cite{zvbib6}).} The coupling method invented by W.~Doeblin in \cite{zvbib1} ordinarily is used to obtain the bounds of convergence rate of a Markov process to the stationary regime.

Below we give a detailed description of this method.

Suppose that $(X_t',\, t\ge 0)$ and $(X_t'',\, t\ge 0)$ are two versions of Markov process $ (X_t,\, t\ge 0)$ with different initial states $X_0'=x'$ and $X_0''=x''$, and with the same transition function; the state space of the process $ (X_t,\, t\ge 0)$ is $\mathscr{X}$ with $\sigma$-algebra $\sigma(\mathscr{X})$.

 In what follows we introduce
 $$ \mathscr{P} ^{x'} _t(A) \stackrel{ \rm{ \;def} } {= \! \! \!=} \mathbf{P} \{X_t' \in A \} , \qquad \mathscr{P} ^{x''} _t(A) \stackrel{ \rm{ \;def} } {= \! \! \!=} \mathbf{P} \{X_t'' \in A \} $$
 for $A\in \sigma(\mathscr{X}),$
  and let
$
 \tau \left(x',x'' \right) \stackrel{ \rm{ \; def} } {= \! \! \!= \! \! \!=} \inf \left \{ t>0: \,X_t'=X_t'' \right \}.
$
Then
$$
 \left| \mathscr{P} _t^{x'} ( A)- \mathscr{P} _t^{x''} ( A) \right| \le \mathbf P \left \{ \tau \left(x',x'' \right) >t \right \}
$$
by the coupling inequality.

The random variable $\tau \left(x',x'' \right)$ is called {\it coupling epoch}.

Now suppose that for some positive increasing unbounded function $ \varphi(t)$ we have $\mathbf{E} \, \varphi \left(\tau \left(x',x'' \right) \right)=C \left(x',x'' \right)< \infty$.
Then from Markov inequality we deduce:
\begin{equation}\label{lab1} 
 \begin{array}{l}
 \left| \mathscr{P} _t^{x'} ( A)- \mathscr{P} _t^{x''} ( A) \right| \le \mathbf P \left \{ \tau \left(x',x'' \right) >t \right \} =
 \\ \\
 \hspace{2.5cm}=\mathbf P \left \{ \varphi \left(\tau \left(x',x'' \right) \right) > \varphi(t) \right \} \le \displaystyle  \displaystyle\frac{\mathbf{E} \, \varphi (\tau (x',x'' ))}{\varphi(t)}.
\end{array}
\end{equation}

Suppose that the process $ (X_t,\, t\ge 0)$ is ergodic, that is, for all initial states $x\in \mathscr{X}$ the distribution $\mathscr{P}_t^x$ converges weakly to the invariant probability measure $\mathscr{P}$ as $t\to \infty$, i.e. $\mathscr{P}_t^x\Longrightarrow \mathscr{P}$ as $t\to \infty$.

Integrating of the inequality (\ref{lab1}) with respect to the stationary measure $ \mathscr{P} $ we obtain
 \begin{equation} \label{zvlab2} 
 \left| \mathscr{P} _t^{x'} ( A)- \mathscr{P} (A) \right| \le  \displaystyle \frac{\displaystyle \int \limits_{ \mathscr{X} } \varphi (\tau (x',x'' ) ) \, \mathrm{d} \mathscr{P} \left (x'' \right)}{\varphi(t)}= \displaystyle\frac{\mathscr{ C} \left (x' \right )}{\varphi(t)} ,
 \end{equation}
and
$$
 \left \| \mathscr{P} _t^{x'} - \mathscr{P} \right \|_{TV} \le 2\,\displaystyle \frac{\mathscr{ C} \left(x' \right)}{\varphi(t)} .
$$

\subsubsection{Successful coupling (see \cite{Grif}) and strong successful coupling.}

The original coupling method was  most commonly used for the Markov chains, i.e. for random processes in discrete time.

It is required to modify application of the coupling method
for random processes in continuous time, since this case suggests $\mathbf P \left \{ \tau \left(x',x'' \right)<+\infty \right \}<1$.

To resolve this problem it was proposed  to construct (in a special probability space) the paired stochastic process $ \left(\mathscr{Z} _t,\, t\ge 0\right)= \left( \left(Z_t',Z_t''\right ),\, t\ge 0\right)$ such that:
\\

{\it 1. $ \;X_t' \stackrel{ \mathscr{D} } {=} Z_t'$ and $X_t'' \stackrel{ \mathscr{D} } {=} Z_t''$ for all $t \ge 0 $;
\\

 2. $\mathbf{P}\left\{\tau (Z_0',Z_0'' )<\infty\right\}=1$, where

~~~$
 \tau \left(Z_0',Z_0'' \right)={ \tau} (\mathscr Z_0 ) \stackrel{ \rm{def} } {= \! \! \!= \! \! \!=} \inf \left \{ t \ge 0: \,Z_t'=Z_t'' \right \};
$
\\

 3. $Z_t'=Z_t''$ for all $t \ge { \tau} \left(Z_0',Z_0'' \right)$.}
 \\

The paired stochastic process $ (\mathscr{Z} _t,\, t\ge 0)= \left(\left (Z_t',Z_t'' \right),t \ge0 \right)$ which satisfies conditions \emph{1}--\emph{3} is called {\it successful coupling} -- see \cite{Grif}.

Let us replace condition \emph{2} by the condition
\\

{\it 2\,$'$. $ \mathbf{E} \, \tau (Z_0',Z_0'' )< \infty$, where
$
 \tau \left(Z_0',Z_0'' \right)={ \tau} (\mathscr Z_0 ) \stackrel{ \rm{def} } {= \! \! \!= \! \! \!=} \inf \left \{ t \ge 0: \,Z_t'=Z_t'' \right \}.
$}
\\

We call the paired stochastic process $ \mathscr{Z} _t= \left(\left (Z_t',Z_t'' \right),t \ge0 \right)$  which satisfies conditions {\it 1}, {\it 2\,}$'$ and {\it 3} {\it the strong successful coupling}.
\vspace{.2cm}

{ \it Note that the processes $(Z_t',\, t\ge 0)$ and $(Z_t'',\, t\ge 0)$ can be non-Markov, and its finite-dimensional distributions may differ from the finite-dimensional distributions of $(X_t',\, t\ge 0)$ and $(X_t'',\, t\ge 0)$ respectively; furthermore, generally speaking, the processes $(Z_t',\, t\ge 0)$ and $(Z_t',\, t\ge 0)$  turn out to be dependent}.
\vspace{.2cm}

Then for all $A \in \sigma (\mathscr{X})$ we  use the coupling inequality in the following form:
 \begin{equation} \label{zvlab3} 
 \begin{array} {l}
 \left| \mathscr{P} ^{x'} _t(A) - \mathscr{P} _t^{x''} (A) \right|= | \mathbf{P} \{ X_t' \in A \} - \mathbf{P} \{ X_t'' \in A \} |= \hspace{3cm}
 \\ \\
 \hspace{1.5cm} = | \mathbf{P} \{ Z_t' \in A \} - \mathbf{P} \{ Z_t'' \in A \} | \le
 \mathbf{P} \{ { \tau} (Z_0',Z_0'' ) \ge t \}
 \le \\ \\
 \hspace{5.5cm}
 \le \displaystyle\frac{ \mathbf {E} \, \varphi ({ \tau} (Z_0',Z_0'' ) )}{\varphi(t)} \le  \displaystyle  \frac{ C (Z_0',Z_0'' )}{\varphi(t)},
 \end{array}
 \end{equation}
where $C (Z_0',Z_0'' )\ge \mathbf{E}\,\varphi(\tau(Z_0',Z_0''))$.

As $Z_0' =X_0'=x' $ and $Z_0'' =X_0''=x'' $, the right-hand side of the inequality depends only on $x' $ and $x'' $; $C (Z_0',Z_0'' )=C (x',x'' )$.
Hence we can integrate the inequality (\ref{zvlab3}) with respect to the stationary measure $ \mathscr{P} $ as in (\ref{zvlab2}):
 $$
 \left| \mathscr{P} ^{x'} _t(A) - \mathscr{P} (A) \right| \le \displaystyle \frac{\displaystyle  \int \limits_{ \mathscr{X} } C \left(x',x'' \right) \mathscr{P} \left(\, \mathrm{d} x'' \right)}{\varphi(t)}  =  \frac{\mathscr{C} \left( x' \right)}{\varphi(t)} ,
$$
and therefore
$$
\left\|\mathscr{P} ^{x'} _t - \mathscr{P} \right\|_{TV}\le 2\, \displaystyle  (\varphi(t))^{-1}\mathscr{C} \left( x' \right).
$$
\vspace{0.2cm}

{ \it However, this integration leads to certain difficulties} -- see, e.g., \cite{{zvbib13},{zvbib14},{zvbib15},{zvbib16}}.

 \subsection{\emph{Stationary coupling method.}}\label{offer} 
In what follows we construct a strong successful coupling \linebreak$ \Big(\mathscr{Z}_t,\, t\ge 0\Big)=\left(\left(Z_t, \widetilde Z_t\right),\, t\ge 0\right)$ for the process $\Big(X_t,\, t\ge 0 \Big)$ with an initial state $x\in\mathscr{X}$ and its stationary version $ \left(\widetilde X_t,\, t\ge 0\right)$.
After that we obtain the estimate for the random variable
 $$ \widetilde \tau(x)= \widetilde \tau(Z_0) \stackrel {{ \; \rm def} } {= \! \! \!=} \inf \left \{ t>0: \, Z_t= \widetilde Z_t \right \}.
 $$
If we prove the finiteness of $\mathbf{E}\,\varphi(\widetilde{\tau}(x))$ then we obtain
$$ \left \| \mathscr{P} ^{x} _t(A) - \mathscr{P} (A) \right \|_{TV} \le 2 \,\mathbf{P} \left \{ \widetilde \tau(x)>t \right \} \le 2\,(\varphi(t))^{-1} \mathbf{E} \,\varphi(\widetilde \tau(x))
$$
analogously to the inequality (\ref{zvlab3}).
\begin{definition}[Stationary Coupling]~

A successful coupling of the Markov process and its stationary version is called \emph{stationary successful coupling}.

We call the method of construction of stationary successful coupling, and the use of this construction for bounds of the convergence rate of the distribution of a Markov process to the stationary distribution the\emph{ stationary coupling method}.
\end{definition}

Our goal is to describe the construction of the stationary successful coupling for renewal process, and application of this construction in order to obtain the bounds for the convergence of the distribution of the regenerative processes to the stationary distribution.

\subsection{The structure of the article}
This article  is divided into 5 Sections, including the Introduction.

In Section 2 we set up main definitions and some necessary denotations.

Section 3 describes the construction of the stationary successful coupling for the backward renewal process when  \emph{Key Condition} is satisfied.

Section 4 demonstrates application of the stationary coupling method for the bounds of the convergence rate of the backward renewal process.

Section 5 extends the results of Section 4 to the regenerative Markov and regenerative non-Markov processes and discusses the way to use the stationary coupling method for the queueing theory.

\section{Some definitions and denotations}

\subsection{Definitions}

\begin{definition}[Renewal Process]

Let $\{ \zeta_{i}\}_{_{i=0}}^{{}^\infty}$ be a sequence of positive independent random variables, and the random variables $\{\zeta_{i}\}_{_{i=1}}^{{}^\infty}$ are identically distributed; denote by $F(s) \stackrel{{\mathrm{def}}}{=\!\!\!=}   \mathbf{P} \{\zeta_i\le s\}$ for $i\ge 1$, and by $G(s) \stackrel{{\mathrm{def}}}{=\!\!\!=}   \mathbf{P} \{\zeta_0\le s\}$.

Suppose that $ \mathbf {E} \,\zeta_i<\infty$ for all $i\ge 0$, and  $\displaystyle \theta_{n} \stackrel{{\mathrm{def}}}{=\!\!\!=} \sum _{i=0}^{n}\zeta_{i}$ for each $n \ge 0$.
Each $\displaystyle \theta_{n}$ is referred to as the $ n^{\scriptsize\mbox{th}}$ renewal time (or renewal point), the intervals $ [\theta_{n},\theta_{n+1}]$
being called renewal intervals, and $\{\zeta_i \stackrel{{\mathrm{def}}}{=\!\!\!=}  \theta_{i+1}-\theta_{i}\}_{_{i=0}}^{{}^\infty}$ being called renewal periods; $\theta_0=\zeta_0$ is called first renewal point.

Then the random variable $  \displaystyle  R_{t} \stackrel{{\mathrm{def}}}{=\!\!\!=}  \sum\limits  _{n=0}^{\infty }\mathbf {1} (\theta_{n}\le t)=\max \left\{\,n:\,\theta_{n}\le t\,\right\}$
(where $ \mathbf {1}(\cdot) $ is the indicator function) represents the number of jumps that have occurred by time $t$, and  we call the process $(R_t,\,t\ge 0)$ a renewal process.

If $\theta_0= \zeta_0 \neq 0$ then the process $(R_t, \,t \ge 0)$ is called delayed.
\end{definition}
\begin{remark}

The renewal process $(R_t,\,t\ge 0)$ is a counting process, and it is not regenerative.
\end{remark}

\begin{definition}[Backward and Forward Renewal Processes]\label{proback}~\\ 
Let $N_t \stackrel{{\mathrm{def}}}{=\!\!\!=}  \left(t-\max\{\theta_n:\,\theta_n\le t\}\right)$ and $N_t^\ast \stackrel{{\mathrm{def}}}{=\!\!\!=}  \left(\min\{\theta_n:\,\theta_n\le t\}-t\right)$, where
$N_t$ is the backward renewal time of the renewal process $R_t$, and $N_t^\ast$ is the forward renewal time of the renewal process $R_t$.

We call the processes $(N_t,\,t\ge 0)$ and $(N_t^\ast,\,t\ge 0)$ \emph{ backward renewal process} and \emph{ forward renewal process} respectively.

At the same time we call the process $(N_t,\,t\ge 0)$ an \emph{embedded backward renewal process} of the renewal process $(R_t,\,t\ge 0)$.
\end{definition}
\begin{remark}
The processes $(N_t,\,t\ge 0)$ and $(N_t^\ast,\,t\ge 0)$ are  Markov piecewise-linear regenerative processes with the state space $\mathscr{R} \stackrel{{\mathrm{def}}}{=\!\!\!=}  \mathbf{R}_{\ge 0}$ with the Borel $\sigma$-algebra $\sigma(\mathscr{R})$.
\end{remark}

Therefore we construct the stationary successful coupling for the backward renewal process $(N_t,\,t\ge 0)$.

\begin{remark}\label{predraspr} 
It is  a well-known fact that if the distribution $F(s)$ is not lattice then
$$
 \lim\limits _{t\to \infty} \mathbf{P} \{N_t\le s\}= \lim\limits _{t\to \infty} \mathbf{P} \{N_t^\ast\le s\} = \widetilde{F}(s),
$$
where
\begin{equation}\label{fla} 
 \widetilde{F}(s)=( \mathbf {E} \,\zeta_1)^{-1} \displaystyle  \int \limits _0^s(1-F(u)) \, \mathrm{d}\,  u.
\end{equation}
\end{remark}

 Below we shall see that application of the stationary coupling method is possible only if the following \emph{Key Condition} is satisfied.
 \vspace{.2cm}

\noindent{\emph {Key Condition.}}
In what follows we suppose that  the following inequality for the cumulative distribution function of the renewal period of the renewal process (or of the length of the regeneration period of the regenerative process) is true, i.e.
$$ \displaystyle
\int \limits_{ \{s: \, \exists F'(s) \} } F'(s) \, \mathrm{d} \, s>0,$$
 and $\mathbf{E} \, \zeta_i< \infty.$
\vspace{0.2cm}

\begin{remark}\label{nenul} 
\emph {Key Condition} for the renewal process implies:
\begin{itemize}
 \item[$\bullet$] $ \mathbf {E} \zeta_i>0$ for $i\ge 0$;
 \item[$\bullet$] There exists an invariant probability distribution $ \mathscr{P} $ on $(\mathscr{R},\sigma(\mathscr{R}))$ which satisfies (\ref{fla}) such that $ \mathscr{P} _t^r\Longrightarrow \mathscr{P} $, where $ \mathscr{P} _t^r(M) \stackrel{{\mathrm{def}}}{=\!\!\!=}  \mathbf{P} \{N_t\in M|\, N_0=r \}$, $M\in \sigma(\mathscr{R})$.
\end{itemize}
\end{remark}
\begin{remark}
For convenience of the reader we assume that the first renewal time of the process $(R_t,\,t\ge0)$ has the cumulative distribution function \linebreak$F_r(s) \stackrel{{\mathrm{def}}}{=\!\!\!=}   \displaystyle \frac{F(r+s)-F(r)}{1-F(r)}$, where $r\ge 0$ is the initial state of the backward renewal process $(N_t,\, t\ge0)$.

$F_r(s)$ is a cumulative distribution function of the residual time of the renewal period if $r$ is a given elapsed time of this period.
\end{remark}

\subsection{Denotations}

\noindent\emph{Denotation 1.} For nondecreasing function $F(s)$ we introduce

\hspace{3cm}$
F^{-1} (y) \stackrel{ \mathrm{def} } {= \! \! \!=} \inf \{x: \,F(x) \ge y \} .
$
 \vspace{.2cm}

\noindent\emph{Denotation 2.} In what follows $ \mu \stackrel{ \mathrm{def} } {= \! \! \!=} \mathbf{E} \, \zeta_1$ and $ \mu_0 \stackrel{ \mathrm{def} } {= \! \! \!=} \mathbf{E} \, \zeta_0$.
\vspace{.2cm}

\noindent\emph{Denotation 3.}  Here and hereafter we put $ \widetilde{F} (s) \stackrel{ \mathrm{def} } {= \! \! \!=}\mu^{-1} \displaystyle \int \limits_{0} ^s (1-F(u)) \, \mathrm{d} \, u$ and

\hspace{2.4cm} $\widetilde{f}(s) \stackrel{{\mathrm{def}}}{=\!\!\!=}  \widetilde{F}'(s)=\mu^{-1}(1-F(u))$.
 \vspace{.2cm}

\noindent\emph{Denotation 4.} For $r\ge 0$  we denote by $F_r(s) \stackrel {{ \rm \; def} } {= \! \! \!=} \displaystyle  \frac{F(s+r)-F(r)} {1-F(r)} $.
 \vspace{.2cm}

\noindent\emph{Denotation 5.}  $\mathscr{U}$, $\mathscr{U}'$, $\mathscr{U}''$, $ \mathscr{U} _i, \mathscr{U} _i', \mathscr{U} _i'', \mathscr{U} _i'''$ are independent uniformly distributed on $[0,1)$ random variables on some probability space $({ \Omega} , { \mathscr{F} } , { \mathbf{P} })$.
 \vspace{.2cm}
\begin{center}
 \begin{figure}
 \centering
\begin{picture}(330,210)
\put(0,0){\vector(1,0){330}}
\put(0,0){\vector(0,1){205}}
\thicklines
\linethickness{0.3mm}
\qbezier(100,0)(115,155)(130,0)
\qbezier(274,0)(280,290)(286,0)
\linethickness{0.7mm}
\put(0,0){\line(1,0){50}}
\put(51,29){\line(1,0){50}}
\put(285,130){\line(1,0){15}}
\put(301,200){\line(1,0){30}}
\put(126,59){\line(1,0){24}}
\put(151,109){\line(1,0){125}}
\linethickness{0.6mm}
\qbezier(101,29)(108,28)(114,43)
\qbezier(114,43)(121,60)(126,59)
\qbezier(276,110)(278,107)(280,119)
\qbezier(280,119)(283,131)(285,130)
\thinlines
\put(0,51){\circle{2}}
\put(2,51){\circle{2}}
\put(4,51){\circle{2}}
\put(6,51){\circle{2}}
\put(8,51){\circle{2}}
\put(10,51){\circle{2}}
\put(12,51){\circle{2}}
\put(14,51){\circle{2}}
\put(16,51){\circle{2}}
\put(18,51){\circle{2}}
\put(20,51){\circle{2}}
\put(22,51){\circle{2}}
\put(24,51){\circle{2}}
\put(26,51){\circle{2}}
\put(28,51){\circle{2}}
\put(30,51){\circle{2}}
\put(32,51){\circle{2}}
\put(34,51){\circle{2}}
\put(36,51){\circle{2}}
\put(38,51){\circle{2}}
\put(40,51){\circle{2}}
\put(42,51){\circle{2}}
\put(44,51){\circle{2}}
\put(46,51){\circle{2}}
\put(48,51){\circle{2}}
\put(50,51){\circle{2}}
\put(52,44){\circle{2}}
\put(54,44){\circle{2}}
\put(56,44){\circle{2}}
\put(58,44){\circle{2}}
\put(60,44){\circle{2}}
\put(62,44){\circle{2}}
\put(64,44){\circle{2}}
\put(66,44){\circle{2}}
\put(68,44){\circle{2}}
\put(70,44){\circle{2}}
\put(72,44){\circle{2}}
\put(74,44){\circle{2}}
\put(76,44){\circle{2}}
\put(78,44){\circle{2}}
\put(80,44){\circle{2}}
\put(82,44){\circle{2}}
\put(84,44){\circle{2}}
\put(86,44){\circle{2}}
\put(88,44){\circle{2}}
\put(90,44){\circle{2}}
\put(92,44){\circle{2}}
\put(94,44){\circle{2}}
\put(96,44){\circle{2}}
\put(98,44){\circle{2}}
\put(100,44){\circle{2}}
\put(102,44){\circle{2}}
\put(104,43){\circle{2}}
\put(106,43){\circle{2}}
\put(108,42){\circle{2}}
\put(110,42){\circle{2}}
\put(112,41){\circle{2}}
\put(114,40){\circle{2}}
\put(116,39){\circle{2}}
\put(118,39){\circle{2}}
\put(120,38){\circle{2}}
\put(122,37){\circle{2}}
\put(124,37){\circle{2}}
\put(126,36){\circle{2}}
\put(128,36){\circle{2}}
\put(130,36){\circle{2}}
\put(132,36){\circle{2}}
\put(134,36){\circle{2}}
\put(136,36){\circle{2}}
\put(138,36){\circle{2}}
\put(140,36){\circle{2}}
\put(142,36){\circle{2}}
\put(144,36){\circle{2}}
\put(146,36){\circle{2}}
\put(148,36){\circle{2}}
\put(150,36){\circle{2}}
\put(152,23){\circle{2}}
\put(154,23){\circle{2}}
\put(156,23){\circle{2}}
\put(158,23){\circle{2}}
\put(160,23){\circle{2}}
\put(162,23){\circle{2}}
\put(164,23){\circle{2}}
\put(166,23){\circle{2}}
\put(168,23){\circle{2}}
\put(170,23){\circle{2}}
\put(172,23){\circle{2}}
\put(174,23){\circle{2}}
\put(176,23){\circle{2}}
\put(178,23){\circle{2}}
\put(180,23){\circle{2}}
\put(182,23){\circle{2}}
\put(184,23){\circle{2}}
\put(186,23){\circle{2}}
\put(188,23){\circle{2}}
\put(190,23){\circle{2}}
\put(192,23){\circle{2}}
\put(194,23){\circle{2}}
\put(196,23){\circle{2}}
\put(198,23){\circle{2}}
\put(200,23){\circle{2}}
\put(202,23){\circle{2}}
\put(204,23){\circle{2}}
\put(206,23){\circle{2}}
\put(208,23){\circle{2}}
\put(210,23){\circle{2}}
\put(212,23){\circle{2}}
\put(214,23){\circle{2}}
\put(216,23){\circle{2}}
\put(218,23){\circle{2}}
\put(220,23){\circle{2}}
\put(222,23){\circle{2}}
\put(224,23){\circle{2}}
\put(226,23){\circle{2}}
\put(228,23){\circle{2}}
\put(230,23){\circle{2}}
\put(232,23){\circle{2}}
\put(234,23){\circle{2}}
\put(236,23){\circle{2}}
\put(238,23){\circle{2}}
\put(240,23){\circle{2}}
\put(242,23){\circle{2}}
\put(244,23){\circle{2}}
\put(246,23){\circle{2}}
\put(248,23){\circle{2}}
\put(250,23){\circle{2}}
\put(252,23){\circle{2}}
\put(254,23){\circle{2}}
\put(256,23){\circle{2}}
\put(258,23){\circle{2}}
\put(260,23){\circle{2}}
\put(262,23){\circle{2}}
\put(264,23){\circle{2}}
\put(266,23){\circle{2}}
\put(268,23){\circle{2}}
\put(270,23){\circle{2}}
\put(272,23){\circle{2}}
\put(274,23){\circle{2}}
\put(276,23){\circle{2}}
\put(278,22){\circle{2}}
\put(280,21){\circle{2}}
\put(282,19){\circle{2}}
\put(284,18){\circle{2}}
\put(286,18){\circle{2}}
\put(288,18){\circle{2}}
\put(290,18){\circle{2}}
\put(292,18){\circle{2}}
\put(294,18){\circle{2}}
\put(296,18){\circle{2}}
\put(298,18){\circle{2}}
\put(300,18){\circle{2}}
\put(302,0){\circle{2}}
\put(304,0){\circle{2}}
\put(306,0){\circle{2}}
\put(308,0){\circle{2}}
\put(310,0){\circle{2}}
\put(312,0){\circle{2}}
\put(314,0){\circle{2}}
\put(316,0){\circle{2}}
\put(318,0){\circle{2}}
\put(320,0){\circle{2}}
\put(322,0){\circle{2}}
\put(324,0){\circle{2}}
\put(-1,100){\line(1,0){2}}
\put(-1,200){\line(1,0){2}}
\put(3,97){0.5}
\put(3,197){1}
\put(150,122){\vector(1,-1){10}}
\put(30,64){\vector(-1,-1){10}}
\put(102,92){\vector(1,-1){10}}
\put(96,97){$f(s)$}
\put(266,142){\vector(1,-1){10}}
\put(260,147){$f(s)$}
\put(24,69){$\widetilde F(s)$}
\put(144,127){$F(s)$}
\end{picture}
 \caption{Illustration to Proposition \ref{kappa} and Remark \ref{zero}. The mixed type cumulative distribution function  $F(s)$ has the positive density $f(s)$ within two intervals, and three jumps.}\label{illu}
 \end{figure}
\end{center}

\noindent\emph{Denotation 6.} Denote
$$
\begin{array}{l}
 \varphi(s) \stackrel {{ \rm \; def} } {= \! \! \!=} \mathbf{1} \Big(\exists \, F'(s) \Big) \times \left(F'(s) \wedge \widetilde F'(s) \right)=
 \\ \\
 \hspace{5cm}=\left\{\begin{array}{ll}
                 F'(s) \wedge \widetilde F'(s), & \mbox{if there exists }  F'(s), \\ \\
                 0, & \mbox{otherwise}
               \end{array} \right.
\end{array}
$$
and
$$ \Phi(s) \stackrel {{ \rm \; def} } {= \! \! \!=}  \displaystyle \int \limits_0^s \varphi(u) \, \mathrm{d} \, u;\qquad \kappa \stackrel{{\mathrm{def}}}{=\!\!\!=}  \Phi(+\infty); \qquad \overline{\kappa} \stackrel{{\mathrm{def}}}{=\!\!\!=}  1-\kappa.$$

\begin{proposition}\label{kappa} 
{ Key Condition} implies $ \kappa = \displaystyle  \int\limits_0^ \infty \varphi(s) \, \mathrm{d} \, s>0$.
\end{proposition}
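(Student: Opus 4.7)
The plan is to exhibit a Lebesgue set of positive measure on which $\varphi(s)>0$; since $\varphi\ge 0$, this immediately gives $\kappa=\int_0^\infty\varphi(s)\,\mathrm d s>0$. Key Condition guarantees that $F'\ge 0$ satisfies $\int_D F'(s)\,\mathrm d s>0$, where $D:=\{s:\exists\,F'(s)\}$. Hence the set $D^+:=\{s\in D:F'(s)>0\}$ has strictly positive Lebesgue measure, for otherwise $F'=0$ a.e.\ on $D$ would force the integral to vanish.

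Next I would analyze the factor $\widetilde F'(s)=\mu^{-1}(1-F(s))$. Since $\zeta_1>0$ and $\mathbf E\,\zeta_1<\infty$, we have $\mu\in(0,\infty)$, so $\widetilde F'(s)>0$ precisely where $F(s)<1$. Define $s^\ast:=\inf\{s\ge 0:F(s)=1\}\in(0,+\infty]$. By definition of the infimum, $F(s)<1$ for every $s<s^\ast$, and therefore $\widetilde F'(s)>0$ on $[0,s^\ast)$. The key geometric step is to verify that $D^+\cap[0,s^\ast)$ still has positive Lebesgue measure: on $[s^\ast,\infty)$ the c.d.f.\ $F$ equals $1$, hence is constant, so $F'=0$ Lebesgue-a.e.\ there; if $D^+$ were contained in $[s^\ast,\infty)$ up to a null set, the strict inequality $F'>0$ on $D^+$ would be contradicted.

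Finally, on $D^+\cap[0,s^\ast)$ both $F'(s)>0$ and $\widetilde F'(s)>0$, so by the definition of $\varphi$ we have $\varphi(s)=F'(s)\wedge\widetilde F'(s)>0$ there. Writing this set as the increasing union $\bigcup_n\{\varphi>1/n\}$, at least one level set has positive Lebesgue measure $\lambda_n$, whence $\kappa\ge \lambda_n/n>0$. The only step that relies on something beyond bookkeeping is the a.e.-vanishing of the derivative of a function that is constant on an interval, which is a standard fact, so I do not anticipate a real obstacle; the proposition is essentially a sanity check that Key Condition excludes the degenerate configurations ($F$ purely singular, or all mass of $F'$ concentrated above the support of $1-F$) that would trivialize the coupling construction to follow.
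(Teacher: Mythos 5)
Your proof is correct and is in fact more careful than the paper's own argument. The paper asserts that Key Condition ``implies that there exists a positive density $f(s)=F'(s)$ on some interval $(s_1,s_2)$'', then picks this interval so that $1-F(s_2)>0$ and bounds $\kappa\ge\int_{s_1}^{s_2}(f\wedge\widetilde f)\,\mathrm ds$. That is the right picture, but it tacitly strengthens the hypothesis: Key Condition only guarantees $\int_{\{s:\,\exists F'(s)\}}F'(s)\,\mathrm ds>0$, i.e. that $\{F'>0\}$ has positive Lebesgue measure, not that it contains an interval; it also needs a small adjustment at the right endpoint to ensure $F(s_2)<1$. You instead work directly with the positive-measure set $D^+=\{s:\exists F'(s),\,F'(s)>0\}$, introduce $s^\ast=\inf\{s:F(s)=1\}$, observe that $F$ is constant (hence $F'=0$ a.e.) on $[s^\ast,\infty)$ so $D^+\cap[0,s^\ast)$ retains positive measure, and note that $\widetilde F'(s)=\mu^{-1}(1-F(s))>0$ on $[0,s^\ast)$. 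This gives $\varphi>0$ on a set of positive measure, hence $\kappa>0$, via the layer sets $\{\varphi>1/n\}$. The two routes are morally the same -- both locate a region where both $F'$ and $\widetilde F'$ are strictly positive -- but yours handles exactly the hypothesis as stated (no interval assumption) and resolves the endpoint issue cleanly, while the paper's version is shorter and appeals to the typical situation where the absolutely continuous part of $F$ does sit on an interval.
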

\begin{proof}
Indeed,  \emph{Key Condition} implies that there exists a positive density \linebreak$f(s)=F'(s)$ on some interval $(s_1,s_2)$, $s_1<s_2$.

It is  easy to check that for all $s\in\left (0,s_2\right)$ the inequality $F(s)<1$ is true.
Hence $\widetilde{f}(s)=\widetilde{F}'(s)= \displaystyle \frac{1-F(s)}{\mu} \ge \displaystyle  \frac{1-F(s_2)}{\mu}>0$ for all $s\in \left(s_1,s_2\right)$ and $
\kappa\ge \displaystyle   \int \limits _{s_1}^{s_2}\left(f(s)\wedge \widetilde{f}(s)\right) \, \mathrm{d}\,  s>0
$ -- see Fig. \ref{illu}  for details.

So, Proposition \ref{kappa} is proved. $\bullet$
\end{proof}
\begin{remark}\label{zero} 
If the distribution $F(s)$  which has an absolutely continuous component is close to a discrete distribution then $\kappa$ is close to zero -- see Fig. \ref{illu}.
\end{remark}
\noindent \emph{Denotation 7.} We introduce $ \Psi(s) \stackrel {{ \rm \;def} } {= \! \! \!=} F(s)- \Phi(s)$, $ \widetilde \Psi(s) \stackrel {{ \rm \; def} } {= \! \! \!=} \widetilde F(s)- \Phi(s)$.
\begin{remark} Note that $ \Psi(+ \infty)= \widetilde \Psi(+ \infty)= 1-\kappa[=\overline{\kappa}]$, and the functions $ \Phi(s)$, $ \Psi(s)$ and $\widetilde \Psi(s)$ are nondecreasing.
\end{remark}

 \begin{remark}
It is easily seen that $ \kappa^{-1} \Phi(s)$ is the cumulative distribution function.
Also if $ \kappa<1$ then $\overline{\kappa}{\,}^{-1} \Psi(s)$ and $\overline{\kappa}{\,}^{-1} \widetilde{ \Psi} (s)$ are the cumulative distribution functions.

If $ \kappa=1$ then $ \Phi(s) \equiv F(s) \equiv \widetilde{F} (s)=1-e^{- \lambda s}$ for $\lambda=\mu^{-1}$ and \linebreak$ \Psi(s) \equiv \widetilde{ \Psi} (s) \equiv 0$.
In this case we put $\overline{\kappa}{\,}^{-1} \Psi(s) \stackrel {{ \rm \;def} } {\equiv \! \! \!\equiv} \overline{\kappa}{\,}^{-1} \widetilde{ \Psi} (s) \stackrel {{ \rm \;def} } {\equiv \! \! \!\equiv} 0$ and $ \Psi^{-1} (u)\stackrel {{ \rm \;def} } {\equiv \! \! \!\equiv} \widetilde{ \Psi} ^{-1} (u)\stackrel {{ \rm \;def} } {\equiv \! \! \!\equiv} 0$.
\end{remark}

\noindent\emph{Denotation 8.} Here and hereafter let us introduce
 $$\begin{array}{l}
 \Xi(\mathscr{U} , \mathscr{U} ', \mathscr{U} '') \stackrel {{ \rm \; def} } {= \! \! \!=} \mathbf{1} (\mathscr{U} < \kappa) \Phi^{-1} (\kappa\, \mathscr{U} ')+ \mathbf{1} (\mathscr{U} \ge \kappa) \Psi^{-1} (\overline{\kappa}{\,} \mathscr{U} '');
 \\ \\
 \widetilde \Xi(\mathscr{U} , \mathscr{U} ', \mathscr{U} '') \stackrel {{ \mathrm{def}} } {= \! \! \!=} \mathbf{1} (\mathscr{U} < \kappa) \Phi^{-1} (\kappa\, \mathscr{U} ')+ \mathbf{1} (\mathscr{U} \ge \kappa) \widetilde \Psi^{-1} (\overline{\kappa}{\,} \mathscr{U} '').
 \end{array}
$$

 \begin{remark}\label{kley} 
Clearly,
$$
F(s)= \kappa \left(\kappa^{-1} \Phi(s) \right)+ \overline{\kappa}{\,} \left(\overline{\kappa}{\,}^{-1} \Psi(s) \right)=\Phi(s)+\Psi(s),
$$
 and
$$ \widetilde{F} (s)= \kappa \left(\kappa^{-1} \Phi(s) \right) +\overline{\kappa}{\,} \left(\overline{\kappa}{\,}^{-1} \widetilde{ \Psi} (s) \right)=\Phi(s)+\widetilde{\Psi}(s).$$
Hence,
$$
\begin{array}{l}
 \mathbf{P} \{ \Xi(\mathscr{U} , \mathscr{U} ', \mathscr{U} '') \le s \} = \mathbf{P} \{ \Xi(\mathscr{U} , \mathscr{U} ', \mathscr{U} '') \le s| \mathscr{U} <\kappa\} \mathbf{P} \{ \mathscr{U} <\kappa\}+
 \\ \\
 \hspace{0.3cm}+ \mathbf{P} \{ \Xi(\mathscr{U} , \mathscr{U} ', \mathscr{U} '') \le s| \mathscr{U} \ge \kappa\} \mathbf{P} \{ \mathscr{U} \ge \kappa\}=
 \\ \\
 \hspace{0.3cm}=\kappa \mathbf{P} \{\Phi^{-1}(\kappa  \mathscr{U} ')\le s\}+(1-\kappa) \mathbf{P} \{\Phi^{-1}((1-\kappa)  \mathscr{U} '')\le s\}=
 \\ \\
 \hspace{0.3cm}=\kappa \mathbf{P} \{  \mathscr{U} '\le \Phi(s)\kappa^{-1}\}+(1-\kappa) \mathbf{P} \{  \mathscr{U} ''\le\Phi( s)(1-\kappa)^{-1}\}=F(s).
\end{array}
$$

Analogously,
$\mathbf{P} \{ \widetilde{ \Xi} (\mathscr{U} , \mathscr{U} ', \mathscr{U} '') \le s \} = \widetilde{F} (s)$.

Moreover,
$
\mathbf{P} \{ \Xi(\mathscr{U} , \mathscr{U} ', \mathscr{U} '')= \widetilde{ \Xi} (\mathscr{U} , \mathscr{U} ', \mathscr{U} '') \} = \mathbf{P} \{\mathscr{U} < \kappa\}= \kappa,
$
since the distribution $\widetilde{\Psi}(s)$ is absolutely continuous, and the measure of common part of distributions $\Psi(s)$ and $\widetilde{\Psi}(s)$ is equal to zero.
\end{remark}

\noindent\emph{Denotation 9.} For ~ the ~ random ~ process ~ $(X_t, \, t\ge 0)$ ~ we ~ denote by\linebreak $ \mathscr{P} _t^x(M) \stackrel{{\mathrm{def}}}{=\!\!\!=}  \mathbf{P} \{X_t\in M| X_0=x\}$. ~
If ~ this ~ process ~ is ergodic then \linebreak$ \mathscr{P} (M) \stackrel{{\mathrm{def}}}{=\!\!\!=}  \lim\limits _{t\to \infty} \mathscr{P} _t^x(M)$.

\section{Stationary successful coupling for the backward renewal process $(N_t,\,t\ge 0)$.}\label{konstr} 

This section considers the renewal process $(R_t,\, t\ge 0)$ and its embedded backward renewal process $(N_t,\, t\ge 0)$; from then on we assume that  \emph{Key Condition} is satisfied.

\subsection{Construction of the independent versions of a non-stationary and stationary backward renewal process.}
	
At the beginning, let us recall that the \emph{independent} versions of the processes \linebreak $\Big(N_t,\, t\ge 0\Big)$ and $ \left(\widetilde N_t,\, t\ge 0\right)$ can be constructed as follows (see, e.g., \cite[Chap.V, Proposition 3.5 and Corollary 3.6]{asm}).

\subsubsection{Construction of the version of the non-stationary backward renewal process $(N_t,\, t\ge 0)$ -- see Fig. \ref{inde1}.}\label{nestac} 

If $N_0=r$ then $ \mathbf{P} \{\zeta_0\le s\}=F_r(s)$, where $\zeta_0$ is the first renewal time of the corresponding renewal process $(R_t,\,t\ge 0)$.

We introduce variables $ \zeta_0 \stackrel{ \rm{ \;def} } {= \! \! \!=} F_r ^{-1} (\mathscr{U} _0)$, and $ \zeta_i \stackrel{ \rm{ \;def} } {= \! \! \!=} F^{-1} (\mathscr{U} _i)$ for $i>0;\;$ $\; \theta_i \stackrel{ \rm{ \;def} } {= \! \! \!=} \sum \limits_{j=0} ^i \zeta_j$; then

$$
\begin{array}{l}
 Z_t \stackrel{ \rm{ \;def} } {= \! \! \!=}  \mathbf{1} (t\ge \theta_0)(t- \max \{ \theta_i: \, \theta_i \le t \})+ \mathbf{1} (t<\theta_0)(r+t)=
 \\ \\
 \hspace{4cm}=  \mathbf{1} (t\ge \theta_0)(t- \theta_{R_t})+ \mathbf{1} (t<\theta_0)(r+t)\stackrel{ \mathscr{D} } {=} N_t.
\end{array}
$$
\begin{center}
 \begin{figure}[h]
 \centering
 \begin{picture}(330,40)
\put(0,20){\vector(1,0){330}}
\thicklines
\qbezier(0,30)(20,35)(60,20)
\qbezier(0,30)(20,33)(60,20)
\qbezier(0,30)(20,34)(60,20)
\qbezier(60,20)(80,39)(100,20)
\qbezier(60,20)(80,41)(100,20)
\qbezier(60,20)(80,40)(100,20)
\qbezier(100,20)(140,39)(180,20)
\qbezier(100,20)(140,41)(180,20)
\qbezier(100,20)(140,40)(180,20)
\qbezier(180,20)(200,39)(220,20)
\qbezier(180,20)(200,41)(220,20)
\qbezier(180,20)(200,40)(220,20)
\qbezier(220,20)(250,39)(280,20)
\qbezier(220,20)(250,41)(280,20)
\qbezier(220,20)(250,40)(280,20)
\qbezier(280,20)(290,39)(300,20)
\qbezier(280,20)(290,41)(300,20)
\qbezier(280,20)(290,40)(300,20)
\qbezier(300,20)(320,32)(330,31)
\qbezier(300,20)(320,30)(330,30)
\qbezier(300,20)(320,31)(330,30)
\thinlines
\put(160,10){\line(0,1){30}}
\qbezier(100,20)(130,9)(160,20)
\qbezier(100,20)(130,11)(160,20)
\qbezier(100,20)(130,10)(160,20)
\put(128,7){\scriptsize$Z_t$}
\put(162,10){\scriptsize$t$}
\put(5,35){\scriptsize$\zeta_0\sim F_a$}
\put(65,35){\scriptsize$\zeta_1\sim F$}
\put(125,35){\scriptsize$\zeta_2\sim F$}
\put(185,35){\scriptsize$\zeta_3\sim F$}
\put(235,35){\scriptsize$\zeta_4\sim F$}
\put(210,10){\scriptsize$\theta_3$}
\put(58,10){\scriptsize$\theta_0$}
\put(98,10){\scriptsize$\theta_1$}
\put(178,10){\scriptsize$\theta_2$}
\put(278,10){\scriptsize$\theta_4$}
\put(298,10){\scriptsize$\theta_5$}
\put(0,10){\scriptsize$0$}
\end{picture}
 \caption{Construction of the version $(Z_t, \, t \ge 0)$ of the process $\left(N_t,\, t\ge 0\right)$.}\label{inde1}
 \end{figure}
\end{center}

\subsubsection{Construction of the version of the stationary backward renewal process $\left(\widetilde{N}_t,\, t\ge 0\right)$ -- see Fig. \ref{inde2}.}\label{stac} 

Remark \ref{predraspr}  provides us with the formula of the distribution of the stationary processes $ \left(\widetilde{N} _t,\, t\ge0\right)$ and $ \left(\widetilde{N}^\ast _t,\, t\ge0\right)$: ~ $ \mathbf{P} \left\{\widetilde{N} _t\le s \right\}= \mathbf{P} \left\{\widetilde{N}^\ast _t\le s \right\}=\widetilde{F}(s)$, and therefore $ \mathbf{P} \left\{\widetilde{N} _0\le s \right\}= \mathbf{P} \left\{\widetilde{N}^\ast _0\le s \right\}=\widetilde{F}(s)$.

So, we put $ \widetilde{ \theta} _0\left[=\widetilde{N}_0^\ast\right]= \widetilde \zeta_0 \stackrel{ \rm{ \;def} } {= \! \! \!=} \widetilde F^{-1} (\mathscr{U} _1')$, and
$ \widetilde{ \zeta} _i \stackrel{ \rm{ \;def} } {= \! \! \!=} F^{-1} (\mathscr{U} _i')\;$ for $\;i>0;$ \linebreak $ \widetilde{ \theta} _i \stackrel{ \rm{ \;def} } {= \! \! \!=} \sum \limits_{j=0} ^i \widetilde{ \zeta} _j;\;$ $\; \widetilde Z_0 \stackrel{ \rm{ \;def} } {= \! \! \!=} F^{-1} _{ \widetilde \theta_0} (\mathscr{U} _1'')$ (see Denotation 4); then we put
$$
\widetilde Z_t \stackrel{ \rm{ \;def} } {= \! \! \!=} \mathbf{1} \left(t< \widetilde \theta_0 \right) \left(t+ \widetilde Z_0 \right)+ \mathbf{1} \left(t \ge \widetilde \theta_0 \right) \left(t- \max\left \{ \widetilde \theta_n: \, \widetilde \theta_n \le t \right\} \right) \stackrel{ \mathscr{D} } {=} \widetilde N_t.
$$
 \begin{remark} $\mathbf{P} \{ \widetilde Z_0 \le s \} =$
$$
\begin{array}{l}
   =\displaystyle \int \limits_0^ \infty F_{u} (s) \, \mathrm{d} \widetilde{F} (u)= \displaystyle \int \limits_0^ \infty \frac{F(s+u)-F(u)} {1-F(u)}\times \displaystyle \frac{1-F(u)}{\mu} \, \mathrm{d} u=
 \\
 \hspace{1.7cm}=\mu^{-1}\left( \displaystyle  \int \limits_0^\infty (1-F(u)) \, \mathrm{d}\,  u- \displaystyle  \int \limits_0^\infty (( 1-F(s+u)) \, \mathrm{d}\,  u \right)=
 \\
 \hspace{7cm}=\mu^{-1} \displaystyle  \int \limits_0^s (1-F(u)) \, \mathrm{d} u= \widetilde{F} (s).
\end{array}
$$
 \end{remark}
\begin{center}
 \begin{figure}[h]
 \centering
\begin{picture}(330,45)
\put(0,20){\vector(1,0){330}}
\thicklines
{\qbezier(0,30)(20,30)(40,20)}
{\qbezier(0,31)(20,30)(40,20)}
{\qbezier(0,32)(20,30)(40,20)}
{\qbezier(0,29)(20,30)(40,20)}
{\qbezier(0,28)(20,30)(40,20)}
\qbezier(40,20)(60,41)(80,20)
\qbezier(40,20)(60,40)(80,20)
\qbezier(40,20)(60,39)(80,20)
\qbezier(80,20)(120,41)(160,20)
\qbezier(80,20)(120,39)(160,20)
\qbezier(80,20)(120,40)(160,20)
\qbezier(160,20)(200,41)(240,20)
\qbezier(160,20)(200,39)(240,20)
\qbezier(160,20)(200,40)(240,20)
\qbezier(240,20)(260,41)(280,20)
\qbezier(240,20)(260,39)(280,20)
\qbezier(240,20)(260,40)(280,20)
\qbezier(280,20)(310,31)(330,31)
\qbezier(280,20)(310,31)(330,30)
\qbezier(280,20)(310,30)(330,30)
\thinlines
\put(5,43){\vector(-1,-2){5}}
\put(5,45){\scriptsize$\mathscr{P}$}
\put(10,35){\scriptsize$\widetilde \zeta_0\sim\widetilde F$}
\put(45,35){\scriptsize$\zeta_1\sim F$}
\put(105,35){\scriptsize$\zeta_2\sim F$}
\put(185,35){\scriptsize$\zeta_3\sim F$}
\put(255,35){\scriptsize$\zeta_4\sim F$}
\put(238,10){\scriptsize$\widetilde \theta_3$}
\put(38,10){\scriptsize$\widetilde \theta_0$}
\put(77,10){\scriptsize$\widetilde \theta_1$}
\put(158,10){\scriptsize$\widetilde \theta_2$}
\put(278,10){\scriptsize$\widetilde \theta_4$}
\put(0,10){\scriptsize$0$}
\put(140,10){\line(0,1){30}}
\qbezier(80,20)(110,9)(140,20)
\qbezier(80,20)(110,11)(140,20)
\qbezier(80,20)(110,10)(140,20)
\put(103,5){\scriptsize$\widetilde{Z}_t$}
\put(142,10){\scriptsize$t$}\end{picture}
 \caption{Construction of the version $\left(\widetilde{Z}_t,\, t\ge 0\right)$ of the process $\left(\widetilde{N}_t,\, t\ge 0\right)$.}\label{inde2}
 \end{figure}
\end{center}

\begin{remark}
The processes $\Big(Z_t,\,t\ge 0\Big)$ and $\left(\widetilde {Z}_t,\,t\ge 0\right)$ described in Sections \ref{nestac} and \ref{stac} are {\it independent} since they are constructed by using in\-de\-pen\-dent random variables (see Denotation 5).
\end{remark}

\subsection{Construction of the stationary successful coupling for the backward renewal process.}

Now we construct the successful coupling for the process $\Big(N_t,\, t\ge 0\Big)$ with the initial state $N_0=r$ and and its stationary version $ \left(\widetilde N_t,\, t\ge 0\right)$ with the initial distribution $ \mathbf{P} \left\{\widetilde N_t\le s\right\}=\widetilde{F}(s)$ (on some probability space $ \left({ \Omega} , { \mathscr{F} } , { \mathbf{P} }\right)$ -- see Denotation 5).

Here we use the principles of construction exposed in Sections \ref{nestac} and \ref{stac}.
However, the construction considered in these Sections is the con\-s\-t\-ruc\-tion of the \emph{independent versions} of the processes $\Big(N_t,\, t\ge0\Big)$ and $\left(\widetilde{N}_t,\, t\ge0\right)$.
We have mentioned before that the successful coupling for the processes in continuous time is a pair of \emph{dependent} processes.
So, we  have to modify the construction of  Sections \ref{nestac} and \ref{stac}.

To construct the successful coupling for the processes $\Big(N_t,\, t\ge 0\Big)$ and $ \left(\widetilde N_t,\, t\ge 0\right)$, i.e. a pair of the \emph{dependent} backward renewal processes, it suffices to construct all renewal times of the corresponding renewal processes \linebreak$\Big(R_t,\, t\ge0\Big)$ and $\left(\widetilde{R}_t,\, t\ge0\right)$ -- the times $ \vartheta_i$ on Fig. \ref{suc}.
\begin{remark}
Since the processes $\Big(N_t,\, t\ge 0\Big)$ and $ \left(\widetilde N_t,\, t\ge 0\right)$ are  piecewise-linear processes, the coincidence of these processes can occur only at the common renewal time.
\end{remark}

We construct a pair $ \Big(\mathscr Z_t,\, t\ge0\Big)=\left(\left(Z_t, \widetilde{Z} _t\right),\, t\ge0\right)$ by induction -- see Fig. \ref{suc}.
Since we assume that studied backward renewal process is a homogeneous Markov process, the distribution of the first renewal time of non-stationary version of this process has the distribution which depends only on the initial state. Namely, $G(s)=F_r(s)$ if $N_0=r$.

\subsubsection{Construction of the process $ \Big(\mathscr Z_t,\, t\ge0\Big)$.}\label{constr} 

\noindent\emph{Basis of induction.} We put
$
 \theta_0 \stackrel {{ \rm \; def} } {= \! \! \!=} G ^{-1} (\mathscr{U} _{0})[=F_r^{-1}(\mathscr{U} _{0})]\; $, $\;\widetilde \theta_0 \stackrel {{ \rm \; def} } {= \! \! \!=} \widetilde F^{-1} (\mathscr{U} _{0} ')\;$, \linebreak$\widetilde Z_0 \stackrel {{ \rm \; def} } {= \! \! \!=} F_{ \widetilde \theta_0} ^{-1} (\mathscr{U} _{0} '');
$
here and hereafter $\theta_0$ is the first renewal time of the process $\Big(Z_t, \, t\ge0\Big)$, and $\widetilde \theta_0$ is the first renewal time of the process $\left(\widetilde{Z}_t, \, t\ge0\right)$,  $\widetilde{Z}_0$ has an initial distribution $ \mathscr{P} $ of the stationary backward renewal process, i.e. \linebreak $ \mathbf{P} \left\{\widetilde{Z}_0\le s\right\}=\widetilde{F}(s)$.

Now we introduce $Z_t \stackrel {{ \rm \; def} } {= \! \! \!=} t+r\big[=t+N_0\big]$ and $ \widetilde Z_t \stackrel {{ \rm \; def} } {= \! \! \!=} t+ \widetilde Z_0$ for $t \in[0, \vartheta_0)$, where $ \vartheta_0 \stackrel {{ \rm \; def} } {= \! \! \!=} t_0 \wedge \widetilde t_0$ (on Fig. \ref{suc}: $ \vartheta_0= \widetilde{ \theta} _0$).
Time $\vartheta_0$ is the first time when  a renewal of at least one of the processes $\Big(Z_t, \, t\ge0\Big)$ and $\left(\widetilde{Z}_t, \, t\ge0\right)$ occured.

\begin{center}
 \begin{figure}[h]
 \centering
 \begin{picture}(330,100)
\thinlines
\put(0,70){\vector(1,0){330}}
\put(0,20){\vector(1,0){330}}
\put(11,95){\vector(-1,-1){10}}
\thicklines
\multiput(40,10)(0,5){16}{\line(0,1){3}}
\multiput(150,10)(0,5){16}{\line(0,1){3}}
\multiput(0,10)(0,5){16}{\line(0,1){3}}
\qbezier(0,80)(10,81)(40,70)
\qbezier(0,81)(10,82)(40,70)
\qbezier(0,82)(10,83)(40,70)
\qbezier(0,79)(10,80)(40,70)
\qbezier(0,78)(10,79)(40,70)
\qbezier(0,33)(30,35)(60,20)
\qbezier(0,33)(30,34)(60,20)
\qbezier(0,32)(30,34)(60,20)
\qbezier(40,70)(50,82)(60,80)
\qbezier(40,70)(50,83)(60,81)
\qbezier(40,70)(50,84)(60,82)
\qbezier(40,70)(50,81)(60,79)
\qbezier(40,70)(50,80)(60,78)
\thinlines
\qbezier(40,70)(60,90)(80,70)
\thicklines
\multiput(60,10)(0,5){18}{\line(0,1){3}}
\qbezier(60,20)(80,41)(100,20)
\qbezier(60,20)(80,40)(100,20)
\qbezier(60,20)(80,39)(100,20)
\multiput(100,10)(0,5){18}{\line(0,1){3}}
\qbezier(60,87)(85,88)(100,80)
\qbezier(60,86)(85,87)(100,79)
\qbezier(60,85)(85,86)(100,79)
\qbezier(60,84)(90,85)(100,78)
\qbezier(60,83)(90,84)(100,78)
\thinlines
\qbezier(60,85)(85,89)(120,70)
\thicklines
\qbezier(100,20)(140,39)(180,20)
\qbezier(100,20)(140,40)(180,20)
\qbezier(100,20)(140,41)(180,20)
\multiput(180,10)(0,5){18}{\line(0,1){3}}
\qbezier(100,87)(125,86)(150,70)
\qbezier(100,86)(125,86)(150,70)
\qbezier(100,85)(125,86)(150,70)
\qbezier(100,84)(125,86)(150,70)
\qbezier(100,83)(125,86)(150,70)
\qbezier(150,70)(160,86)(180,80)
\qbezier(150,70)(160,85)(180,79)
\qbezier(150,70)(160,84)(180,79)
\qbezier(150,70)(160,85)(180,78)
\qbezier(150,70)(160,85)(180,78)
\thinlines
\qbezier(150,70)(170,92)(190,70)
\thicklines
\qbezier(180,20)(200,39)(220,20)
\qbezier(180,20)(200,40)(220,20)
\qbezier(180,20)(200,41)(220,20)
\qbezier(180,83)(200,86)(220,70)
\qbezier(180,87)(200,86)(220,70)
\qbezier(180,86)(200,86)(220,70)
\qbezier(180,85)(200,86)(220,70)
\qbezier(180,84)(200,86)(220,70)
\multiput(220,10)(0,5){16}{\line(0,1){3}}
\qbezier(220,20)(250,39)(280,20)
\qbezier(220,20)(250,41)(280,20)
\qbezier(220,20)(250,40)(280,20)
\multiput(280,10)(0,5){16}{\line(0,1){3}}
\qbezier(220,70)(250,91)(280,70)
\qbezier(220,70)(250,89)(280,70)
\qbezier(220,70)(250,90)(280,70)
\qbezier(280,20)(290,39)(300,20)
\qbezier(280,20)(290,41)(300,20)
\qbezier(280,20)(290,40)(300,20)
\qbezier(280,70)(290,91)(300,70)
\qbezier(280,70)(290,89)(300,70)
\qbezier(280,70)(290,90)(300,70)
\qbezier(300,20)(320,32)(330,31)
\qbezier(300,20)(320,31)(330,30)
\qbezier(300,20)(320,31)(330,30)
\multiput(300,10)(0,5){16}{\line(0,1){3}}
\qbezier(300,70)(320,82)(330,81)
\qbezier(300,70)(320,81)(330,80)
\qbezier(300,70)(320,81)(330,80)
\thinlines
\put(9,98){$ \mathscr{P} $}
\put(10,85){\scriptsize${\widetilde\zeta_0\sim \widetilde F}$}
\put(50,96){\vector(1,-4){3}}
\put(35,98){\scriptsize$\widetilde\zeta_1\sim F$}
\put(71,100){\vector(-1,-1){10}}
\put(70,102){$ \mathscr{P} $}
\put(71,90){\scriptsize${\widetilde\Xi_1\sim \widetilde F}$}
\put(111,100){\vector(-1,-1){10}}
\put(110,102){$ \mathscr{P} $}
\put(113,90){\scriptsize${\widetilde\Xi_2\sim \widetilde F}$}
\put(153,85){\scriptsize$\widetilde\zeta_4\sim F$}
\put(191,100){\vector(-1,-1){10}}
\put(190,102){$ \mathscr{P} $}
\put(190,88){\scriptsize${\widetilde\Xi_5\sim \widetilde F}$}
\put(5,35){\scriptsize$\zeta_0\sim F_r$}
\put(65,35){\scriptsize$\Xi_1\sim F$}
\put(120,35){\scriptsize$\Xi_2\sim F$}
\put(185,35){\scriptsize$\Xi_3\sim F$}
\put(235,35){\scriptsize$\zeta_4\sim F$}
\put(235,85){\scriptsize$\widetilde \zeta_5= \zeta_4$}
\put(210,13){\scriptsize$\theta_3$}
\put(52,13){\scriptsize$\theta_0$}
\put(92,13){\scriptsize$\theta_1$}
\put(172,13){\scriptsize$\theta_2$}
\put(272,13){\scriptsize$\theta_4$}
\put(292,13){\scriptsize$\theta_5$}
\put(1,13){\scriptsize$0$}
\put(1,63){\scriptsize$0$}
\put(32,60){\scriptsize$\widetilde \theta_0$}
\put(77,60){\scriptsize$\widetilde \theta_1$}
\put(117,60){\scriptsize$\widetilde \theta_2$}
\put(142,60){\scriptsize$\widetilde \theta_3$}
\put(187,60){\scriptsize$\widetilde \theta_4$}
\put(222,60){\scriptsize$\widetilde \theta_5$}
\put(272,60){\scriptsize$\widetilde \theta_6$}
\put(292,60){\scriptsize$\widetilde \theta_7$}
\put(40,0){\scriptsize$\vartheta_0$}
\put(60,0){\scriptsize$\vartheta_1$}
\put(100,0){\scriptsize$\vartheta_2$}
\put(150,0){\scriptsize$\vartheta_3$}
\put(180,0){\scriptsize$\vartheta_4$}
\put(210,0){\scriptsize$\vartheta_5=\widetilde{\tau}$}
\put(280,0){\scriptsize$\vartheta_6$}
\put(300,0){\scriptsize$\vartheta_7$}
\end{picture}
 \caption{Construction of the successful coupling $\Big(\mathscr{Z}_t,\,t\ge 0\Big).$ }\label{suc}
 \end{figure}
\end{center}

\noindent \emph{ Step of induction.}
Suppose that we have already constructed the process $ \Big(\mathscr Z_t,\, t\ge0 \Big)$ for $t \in [0, \vartheta_n)$, $ \vartheta_n= \theta_i \wedge \widetilde \theta_j$.
Then there are only three alternatives.
\begin{case} In this case we have
$ \vartheta_n= \theta_i= \widetilde \theta_j$ -- on Fig. \ref{suc} this situation occurs for the first time at the point $ \vartheta_5$, and then at the points $\vartheta_6$, $\vartheta_7$, etc.

In this situation at the time $ \vartheta_n$ the processes coincide, and at the same time they begin a new renewal period with the same distribution.

Then we put
$$Z_{ \vartheta_n} = \widetilde Z_{ \vartheta_n} = 0, \qquad \theta_{i+1} = \widetilde \theta_{j+1} = \vartheta_{n+1} =F^{-1} (\mathscr{U} _{n+1})+ \vartheta_{n} ;
$$
and $Z_{t} = \widetilde Z_{t} \stackrel {{ \rm \; def} } {= \! \! \!=} t- \vartheta_n$ for $t \in [ \vartheta_n, \vartheta_{n+1})$.
Thus after the first coincidence (time $ \widetilde{ \tau} = \vartheta_5$ on Fig. \ref{suc}) the processes $\left({Z}_t, \, t\ge0\right)$ and $\left(\widetilde{Z}_t, \, t\ge0\right)$ have  identical renewal periods, and therefore these processes are identical.
\end{case}

\begin{case} In this case we obtain $ \vartheta_n= \widetilde \theta_j< \theta_i$ (the times $ \widetilde{ \theta} _0$ and $ \widetilde{ \theta} _3$ on Fig. \ref{suc}), i.e. the renewal period of the stationary version of our renewal process ended before the renewal period of the nonstationary version of our renewal process ended. In this case we  construct the processes similarly as in the previous Sections, i.e. we put
$$
 \widetilde Z_{ \vartheta_n} = 0, \; \; Z_{ \vartheta_n} = Z_{ \vartheta_n-0} , \qquad \widetilde \theta_{j+1} \stackrel {{ \rm \; def} } {= \! \! \!=} \widetilde \theta_j+F^{-1} (\mathscr{U} _{n+1});
$$
and
$$ \widetilde Z_{t} \stackrel {{ \rm \; def} } {= \! \! \!=} t- \vartheta_n, \qquad Z_{t} \stackrel {{ \rm \; def} } {= \! \! \!=} t- \vartheta_n+Z_{ \vartheta_n} $$
for $t \in[ \vartheta_n, \vartheta_{n+1})$, where $ \vartheta_{n+1} \stackrel {{ \rm \; def} } {= \! \! \!=} \theta_i \wedge \widetilde \theta_{j+1} $.

In fact, in this situation, we construct the process $\left(\widetilde{Z}_t,\, t\ge0\right)$ according to the scheme of Section \ref{stac}, and we do not change anything in the behavior of the process $\Big(Z_t,\, t\ge0\Big)$.
\end{case}
\begin{case} In this case we result in $ \vartheta_n= \theta_i< \widetilde \theta_j$ (the times $ \theta_0$, $ \theta_1$ and $ \theta_2$ on Fig. \ref{suc}): the renewal period of the nonstationary process ended, while the renewal period of the stationary process has not passed yet. In this situation we attempt to continue the behaviour of the processes in such a way that they may coincide with the positive probability (equal to $\kappa$) at the next renewal time: we put
$$
 \theta_{i+1} \stackrel {{ \rm \; def} } {= \! \! \!=} \theta_i+ \Xi(\mathscr{U} _{n+1} , \mathscr{U} _{n+1} ', \mathscr{U} _{n+1} ''); \qquad \widetilde \theta_j \stackrel {{ \rm \; def} } {= \! \! \!=} \theta_i+ \Xi(\mathscr{U} _{n+1} , \mathscr{U} _{n+1} ', \mathscr{U} _{n+1} ''),
$$
 and
$$
Z_t \stackrel {{ \rm \; def} } {= \! \! \!=} t- \vartheta_n, \qquad \widetilde Z_t \stackrel {{ \rm \; def} } {= \! \! \!=} t- \vartheta_n + F^{-1} _{ \varrho} (\mathscr{U} _{n+1} ''')
$$
for $t \in[ \vartheta_n, \vartheta_{n+1})$, where $ \varrho = \widetilde \Xi(\mathscr{U} _{n+1} , \mathscr{U} _{n+1} ', \mathscr{U} _{n+1} '')$ and $ \vartheta_{n+1} \stackrel {{ \rm \; def} } {= \! \! \!=} \theta_i \wedge \widetilde \theta_{j+1} $.

As $ \mathbf{P} \left\{\Xi(\mathscr{U} _{n+1} , \mathscr{U} _{n+1} ', \mathscr{U} _{n+1} '')=\widetilde \Xi(\mathscr{U} _{n+1} , \mathscr{U} _{n+1} ', \mathscr{U} _{n+1} '')\right\}=\kappa>0$ (see Re\-mark \ref{kley}), in this situation $ \mathbf{P} \left\{\theta_i = \widetilde \theta_{j+1} \right\}=\kappa>0$.
\end{case}

\subsubsection{The process $\Big( \mathscr Z_t,\, t\ge0\Big)=\left(\left(Z_t, \widetilde Z_t\right),\, t\ge0\right)$ is a successful coupling for the processes $\Big(N_t,\, t\ge0\Big)$ and $\left(\widetilde{N}_t,\, t\ge0 \right)$.}

\begin{lemma}\label{sovprasp} 
$Z_t \stackrel { \mathscr{D}}{=} N_t$ and $\widetilde{Z}_t \stackrel { \mathscr{D}}{=} \widetilde{N}_t$ for all $t\ge 0$.
\end{lemma}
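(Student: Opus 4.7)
The plan is to verify both marginal distributional equalities by induction on the index $n$ of the merged event times $\vartheta_n$, reducing the claim to showing that the renewal sequences and initial heights produced by the coupling construction coincide in law with those of the independent constructions of Sections \ref{nestac} and \ref{stac}. Since $Z_t$ and $\widetilde{Z}_t$ are by construction piecewise linear with unit slope and with jumps to $0$ precisely at the renewal times of their respective renewal processes, it is enough to check that (i) the sequence $\theta_0,\,\theta_1-\theta_0,\,\theta_2-\theta_1,\ldots$ has the joint law of the renewal gaps of Section \ref{nestac}, and (ii) the tuple $\widetilde{Z}_0,\,\widetilde{\theta}_0,\,\widetilde{\theta}_1-\widetilde{\theta}_0,\ldots$ has the joint law described in Section \ref{stac}.

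For the $Z$-component: the basis of induction yields $\theta_0 = F_r^{-1}(\mathscr{U}_0)\sim F_r$, as required. At the inductive step, in Case 1 the new gap is $F^{-1}(\mathscr{U}_{n+1})\sim F$; in Case 3 it is $\Xi(\mathscr{U}_{n+1},\mathscr{U}'_{n+1},\mathscr{U}''_{n+1})\sim F$ by Remark \ref{kley}; Case 2 does not affect $(\theta_i)$. Because each step uses a fresh, independent triple of uniforms (Denotation 5), the successive gaps are mutually independent, so the sequence $(\theta_i)$ has exactly the law of Section \ref{nestac}, giving $Z_t \stackrel{\mathscr{D}}{=} N_t$.

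For the $\widetilde{Z}$-component: the basis yields $\widetilde{\theta}_0 = \widetilde{F}^{-1}(\mathscr{U}'_0)\sim\widetilde{F}$, and $\widetilde{Z}_0 = F^{-1}_{\widetilde{\theta}_0}(\mathscr{U}''_0)$ has CDF $\widetilde{F}$ by the calculation in the Remark immediately following Section \ref{stac}. In Cases 1 and 2 the next gap of $(\widetilde{\theta}_j)$ is $F^{-1}(\mathscr{U}_{n+1})\sim F$, as for the independent stationary construction. The delicate step is Case 3, where at $\vartheta_n=\theta_i$ the stationary process is strictly inside a current renewal interval: the construction assigns the remaining forward renewal time to $\varrho := \widetilde{\Xi}(\mathscr{U}_{n+1},\mathscr{U}'_{n+1},\mathscr{U}''_{n+1})$, which by Remark \ref{kley} has CDF $\widetilde{F}$, and then resets the elapsed part to $F_\varrho^{-1}(\mathscr{U}'''_{n+1})$. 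I would justify this by a direct density computation for the stationary renewal process: the joint density of (backward time, forward time) at a typical instant equals $\mu^{-1} f(u+v)\,du\,dv$, from which the forward time has marginal CDF $\widetilde{F}$ and, conditional on its value $\varrho$, the backward time has CDF $F_\varrho$ --- precisely matching the construction. Hence the sample path of $\widetilde{Z}$ has the same finite-dimensional distributions as that of $\widetilde{N}$.

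The main obstacle I expect is Case 3 for the stationary component: one must argue that even though $\Xi$ and $\widetilde{\Xi}$ are designed to coincide with positive probability $\kappa$ (so as to couple the two renewal sequences), the marginal law of $\widetilde{Z}$ is unaffected by this strong joint dependence. This is exactly the purpose of Remark \ref{kley}: irrespective of the coupling, the marginal CDF of $\Xi$ is $F$ and that of $\widetilde{\Xi}$ is $\widetilde{F}$, so marginalising over the $Z$-coordinate the process $\widetilde{Z}$ retains the same finite-dimensional distributions as the independent stationary renewal process of Section \ref{stac}. After the first coincidence $\widetilde{\tau}$, Case 1 applies repeatedly, the two processes evolve identically with common $F$-distributed gaps, and both marginal distributions are preserved thereafter.
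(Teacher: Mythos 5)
Your proposal takes essentially the same route as the paper's proof: for the $Z$-component, observe the construction coincides with Section \ref{nestac} (first gap $\sim F_r$, then i.i.d.\ $F$-gaps, with Remark \ref{kley} supplying the $F$-marginal of $\Xi$ in Case 3); for the $\widetilde Z$-component, observe that between the restart times $\theta_i$ the construction coincides with Section \ref{stac}, while at each $\theta_i$ (Case 3) the state is re-drawn from the stationary measure $\mathscr{P}$, so the one-dimensional marginal remains $\widetilde F$. Your explicit density computation $\mu^{-1}f(u+v)$ for the stationary joint law of the backward/forward pair is a helpful addition: it makes precise why resetting the forward time to $\varrho=\widetilde\Xi$ and the backward time to $F_\varrho^{-1}(\mathscr{U}''')$ reproduces the stationary distribution, whereas the paper simply asserts a restart ``from the stationary distribution'' and defers the check of the law of $\widetilde Z_0$ to an earlier Remark.

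One inaccuracy should be corrected. Twice you assert that $\widetilde Z$ has the same \emph{finite-dimensional} distributions as $\widetilde N$ (equivalently, as the independent construction of Section \ref{stac}). This is false: the extra resets at the random times $\theta_i$ re-randomize the state of $\widetilde Z$ using fresh uniforms, which necessarily alters multi-time joint laws (e.g.\ $(\widetilde Z_{t_1},\widetilde Z_{t_2})$ with a reset in between is less correlated than $(\widetilde N_{t_1},\widetilde N_{t_2})$). What the restart-from-stationarity argument delivers — and all that Lemma~\ref{sovprasp} asserts, and all that the coupling inequality downstream requires — is equality of the one-dimensional marginals $\widetilde Z_t \stackrel{\mathscr{D}}{=}\widetilde N_t$ for each fixed $t$. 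Rephrasing the conclusion at that level (as the paper does) makes the argument correct.
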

\begin{proof}
First, we see that the construction of the non-stationary process \linebreak$\Big(Z_t, \, t\ge0\Big)$ is identical to the construction of Section \ref{nestac}.
So, the process $\Big(Z_t, \, t\ge0\Big)$ is Markov, and $Z_t \stackrel { \mathscr{D}}{=} N_t$ for all $t\ge0$.

Now consider the process $\left(\widetilde{Z}_t,\,t\ge0\right)$ for $t\in [0,\theta_0)$. Its construction is identical to the construction of  Section \ref{stac}.

At the time $\theta_0$, this process restarts from the stationary distribution.
At the time $\theta_0$, we have forgotten the previous history of the process $\left(\widetilde{Z}_t,\,t\ge0\right)$, and it began its motion further.
The construction of the process $\left(\widetilde{Z}_t,\,t\ge0\right)$ after time $\theta_0$ is identical to the construction of Section \ref{stac}.
Therefore the distribution of this process is stationary as long as we do not interfere with the the construction of this process.
It means that $\widetilde{Z}_t \stackrel { \mathscr{D}}{=} \widetilde{N}_t$ between the time $\theta_0$ and $\theta_1$.

Now we consider the next intervals $[\theta_i,\theta_{i+1})$.

In this case the process $\left(\widetilde{Z}_t,\,t\ge0\right)$ restarts from the stationary distribution at times $\theta_i$, and up to the next restart (at the time $\theta_{i+1}$) this process has a stationary distribution.

So, for all $t\ge 0$ we have $\widetilde{Z}_t \stackrel { \mathscr{D}}{=}\widetilde{N}_t$ as $ \mathbf {E} (\theta_{i+1}-\theta_i)>0$ (see Remark \ref{nenul}), and Lemma \ref{sovprasp} is proved. $\bullet$
\end{proof}
\begin{lemma}\label{uspex} 
$ \mathbf{P} \{\widetilde{\tau}(r)<\infty\}=1$, where $\widetilde{\tau}(r) \stackrel{{\mathrm{def}}}{=\!\!\!=}  \inf\left\{t\ge0:\, Z_t=\widetilde{Z}_t\Big|N_0=r\right\}$, $r\in \mathscr{R}$.
\end{lemma}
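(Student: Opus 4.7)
\emph{Proof plan.} The plan is to recognise that the construction converts coupling into a sequence of (conditionally) independent Bernoulli trials with success probability $\kappa>0$, each trial being triggered by a renewal of the nonstationary component $(Z_t)$; a geometric bound then closes the argument.

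First, I would observe that \emph{Key Condition} yields $\mu=\mathbf{E}\zeta_i<\infty$ and $\zeta_i>0$ a.s., so the sequence $\{\theta_i\}$ is a.s.\ strictly increasing with $\theta_i\to\infty$; in particular $Z_t$ performs infinitely many renewals in finite time. Next, I would analyse which of the three Cases in Section \ref{constr} can occur at an event time $\vartheta_n$ that coincides with some $\theta_i$. Since by construction $\vartheta_n=\theta_i\wedge\widetilde{\theta}_j$, the possibility $\vartheta_n=\theta_i$ forces either $\theta_i=\widetilde{\theta}_j$ (Case 1, immediate coincidence of the two processes) or $\theta_i<\widetilde{\theta}_j$ (Case 3). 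Consequently, every renewal $\theta_i$ that occurs strictly before $\widetilde{\tau}(r)$ must trigger Case 3.

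Second, I would verify that each Case 3 event constitutes a coupling attempt whose conditional probability of success equals exactly $\kappa$. In Case 3 the next points $\theta_{i+1}$ and $\widetilde{\theta}_{j+1}$ are built from the paired randomisation $\Xi(\mathscr{U}_{n+1},\mathscr{U}_{n+1}',\mathscr{U}_{n+1}'')$ and $\widetilde{\Xi}(\mathscr{U}_{n+1},\mathscr{U}_{n+1}',\mathscr{U}_{n+1}'')$; by Remark \ref{kley}, $\mathbf{P}\{\Xi=\widetilde{\Xi}\}=\kappa$, and on this event $\theta_{i+1}=\widetilde{\theta}_{j+1}$, so Case 1 is triggered at $\vartheta_{n+1}$ and $\widetilde{\tau}(r)=\vartheta_{n+1}$. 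Because, by Denotation 5, the quadruples $(\mathscr{U}_{n+1},\mathscr{U}_{n+1}',\mathscr{U}_{n+1}'',\mathscr{U}_{n+1}''')$ are mutually independent and independent of all uniforms with index $\le n$, the success indicator at the $(n+1)$-st attempt is independent of the history $\mathscr{F}_{\vartheta_n}$, with success probability $\kappa$.

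Third, let $K$ denote the number of $\theta_i$'s visited before the first Case 3 success. By the preceding step and Proposition \ref{kappa} ($\kappa>0$), $K$ is stochastically dominated by a Geometric$(\kappa)$ variable, so $\mathbf{P}\{K<\infty\}=1$. Since $\theta_\ell<\infty$ a.s.\ for every finite $\ell$, we conclude $\widetilde{\tau}(r)\le\theta_{K+1}<\infty$ almost surely.

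The main obstacle is the careful bookkeeping in the second step: one has to match the index $n+1$ of the fresh uniforms to the correct $\theta_i$-renewal and check that these uniforms are genuinely independent of the $\sigma$-field generated by the entire prior evolution of $(Z_\cdot,\widetilde{Z}_\cdot)$ up to $\vartheta_n$, so that the attempts at successive Case 3 epochs form an unconditionally independent Bernoulli$(\kappa)$ sequence. Once this independence and the strict positivity $\kappa>0$ are in hand, the geometric estimate is immediate and the lemma follows.
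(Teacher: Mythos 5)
Your proof is correct and follows essentially the same route as the paper: you identify each Case~3 renewal of $Z$ as an independent Bernoulli$(\kappa)$ coupling attempt (via Remark~\ref{kley} and Proposition~\ref{kappa}), conclude that the number of attempts before the first success is Geometric$(\kappa)$ and hence almost surely finite, and combine this with the almost sure finiteness of each $\theta_\ell$. The paper formalises the same idea by introducing the events $\mathscr{S}_n=\{Z_{\theta_n}=\widetilde Z_{\theta_n}\}$ and $\mathbf{S}_n$ (first coincidence at $\theta_n$), showing $\mathbf{P}(\mathbf{S}_n)=\kappa\,\overline\kappa^{\,n-1}$, and applying the law of total probability, together with the independence of the $\zeta_i$; the only small point you omit is the degenerate case $\kappa=1$, which the paper treats separately and which is trivial.
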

\begin{proof}
Denote by $\mathscr{S} _n \stackrel{ \rm{ \;def} } {= \! \! \!=} \left\{ Z_{ \theta_{n} } = \widetilde{Z} _{ \theta_{n} } \right\} ,$
$$
 \mathbf{S}_n \stackrel{ \rm{ \;def} } {= \! \! \!=} \left(\mathscr{S} _{n}\cap \left(\bigcap \limits_{1\le i\le n-1} \overline{ \mathscr{S} } _i \right)\right)=\left\{Z_{ \theta_{n} } = \widetilde{Z} _{ \theta_{n} } \; \&\linebreak Z_{ \theta_i} \neq \widetilde{Z} _{ \theta_i}, i< n\right \} .
$$

It  can be easily seen that $\mathbf{S}_n\cap \mathbf{S}_m=\circ\!\!\!/\;$ if $n\neq m$, $ \mathbf{P} (\mathbf{S}_n)=\kappa\, \overline{\kappa}\,^{n-1}$, and $ \mathbf{P} \left(\bigcup\limits_{n=1}^\infty \mathbf{S}_n\right)=1$.

 In accordance with our construction of the pair $\Big( \mathscr{Z} _t,\, t\ge 0\Big)$, we obtain \linebreak$ \mathbf{P} \left\{Z_{ \theta_0} \neq \widetilde{Z} _{ \theta_0} \right\} =1$ since the distribution $ \widetilde{F} (s)$ is absolutely continuous, and
$ \mathbf{P} \{ \widetilde{ \tau} = \theta_{n} \} = \mathbf{P} (\mathbf{S} _{n})= \kappa {\,}\overline{\kappa}{\,}^{n-1}$, $n\ge 1$.

It is significant to mention that  events $\mathbf{S}_n$ not depend on $r$, $n\ge 1$.

Note that  random variables $\zeta_i \stackrel{{\mathrm{def}}}{=\!\!\!=}  \theta_i-\theta_{i-1}$ $(i\ge 1)$ are independent since they have been constructed by using different independent random variables.

Then from the law of total probability we deduce:
$$
\begin{array}{l}
  \mathbf{P} \{\widetilde{\tau}(r)<\infty\}= \displaystyle  \sum\limits_{n=1}^\infty\Big( \mathbf{P} \left\{ \widetilde{\tau}(r)<\infty| \mathbf{S}_n\right\} \mathbf{P} \{\mathbf{S}_n\}\Big)=
 \\ \\
 \hspace{1cm}= \displaystyle  \sum\limits _{n=1}^\infty \mathbf{P} (\mathbf{S}_n) \left( \mathbf{P} \{\zeta_n<\infty|\mathbf{S}_n\} \prod\limits_{1\le i\le n-1}  \mathbf{P} \{\zeta_i<\infty|\mathbf{S}_n\} \right)=
\\ \\
 \hspace{1cm}= \displaystyle  \sum\limits _{n=1}^\infty \mathbf{P} (\mathbf{S}_n) \left(\kappa\,^{-1} \Phi(\infty) \prod\limits_{1\le i\le n-1}(\overline{\kappa}\,^{-1}\Psi(\infty)) \right)=\kappa \displaystyle   \sum\limits _{n=1}^\infty \overline{\kappa}\,^{n-1}=1.
\end{array}
$$

If $\kappa=1$ then $ \mathbf{P} (\mathbf{S}_1)=1$, and $\widetilde{\tau}(r)=\theta_1$; $ \mathbf{P} \{\theta_1<\infty\}=1$ if  {\it Key Condition} is satisfied.

So, Lemma \ref{uspex} is proved. $\bullet$
\end{proof}
\begin{lemma}\label{posle} 
$Z_t=\widetilde{Z}_t$ for all $t\ge \widetilde{\tau}(r)$.
\end{lemma}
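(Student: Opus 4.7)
The plan is to show that once the processes meet at $\widetilde\tau(r)$, the inductive construction in Section \ref{constr} forces them to coincide at every subsequent time, by exhibiting exactly how Case 1 perpetuates itself.

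First I would identify the structural fact that $\widetilde\tau(r)$ is itself one of the breakpoints $\vartheta_n$ used in the construction. Indeed, by Lemma \ref{uspex} and the Remark preceding Section \ref{constr}, the piecewise-linear processes $Z_t$ and $\widetilde Z_t$ can only coincide at a common renewal epoch. The only place in the construction where the two processes share a renewal epoch is Case 3, where the joint law of $(\theta_{i+1},\widetilde\theta_{j+1})$ puts mass $\kappa$ on the diagonal. So $\widetilde\tau(r) = \theta_i = \widetilde\theta_{j+1}$ for some $i,j$; and at this instant both processes have reset: $Z_{\widetilde\tau(r)} = \widetilde Z_{\widetilde\tau(r)} = 0$ by the definition in Case 3 combined with the fact that at a renewal time the backward process equals zero.

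Next I would perform the induction on the segments $[\vartheta_n,\vartheta_{n+1})$ for $n \ge n_0$ where $\vartheta_{n_0} = \widetilde\tau(r)$. The base of induction is trivial: $Z_{\widetilde\tau(r)} = \widetilde Z_{\widetilde\tau(r)} = 0$. For the step, assume that at time $\vartheta_n$ we have $Z_{\vartheta_n} = \widetilde Z_{\vartheta_n} = 0$ and $\theta_i = \widetilde\theta_j = \vartheta_n$. Then we are automatically in Case 1 of the construction, which by definition sets $\theta_{i+1} = \widetilde\theta_{j+1} = \vartheta_{n+1} = \vartheta_n + F^{-1}(\mathscr{U}_{n+1})$ using the \emph{same} random variable $\mathscr{U}_{n+1}$, and defines $Z_t = \widetilde Z_t = t - \vartheta_n$ on $[\vartheta_n,\vartheta_{n+1})$. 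In particular the processes agree on the whole interval, and at time $\vartheta_{n+1}$ they again reset simultaneously to $0$, so the hypothesis of the induction step is reproduced.

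The only subtle point — and thus the one I would state explicitly — is the verification that once Case 1 is triggered it cannot be exited: this is an immediate consequence of the fact that Cases 2 and 3 both require a strict inequality between the current values of $\theta_i$ and $\widetilde\theta_j$, while after a Case 1 step we have $\theta_{i+1} = \widetilde\theta_{j+1}$ by construction. Combining the two paragraphs yields $Z_t = \widetilde Z_t$ for every $t \ge \widetilde\tau(r)$, which proves Lemma \ref{posle}. No serious obstacle is expected; the argument is essentially bookkeeping on the three cases of the construction.
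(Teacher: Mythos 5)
Your argument is correct and matches the paper's approach, which dispatches this lemma in a single sentence by pointing to the self-perpetuating nature of Case 1 of the construction; you have simply spelled out the induction and bookkeeping in detail. (One negligible imprecision: a shared renewal epoch could in principle also occur at the end of a Case 2 step when $F$ has atoms, not only via Case 3, but this does not affect the argument since all that matters is that both processes reset to $0$ at $\widetilde{\tau}(r)$ and thereafter Case 1 is entered and never exited.)
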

\begin{proof}
 The statement of the Lemma \ref{posle} follows from the construction of the Case 1 (Section \ref{constr}) and Lemma \ref{posle} is proved.
\end{proof}

\begin{lemma}\label{strogiy} 
$ \mathbf {E} \,\widetilde{\tau}(r)<\infty$.
\end{lemma}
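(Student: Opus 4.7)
The plan is to represent $\widetilde{\tau}(r)$ as a randomly-indexed sum of renewal increments and then use the mixture identity from Remark \ref{kley} to evaluate its expectation in closed form. First I would observe that by construction $\widetilde{\tau}(r) = \theta_N$, where $N \stackrel{\rm def}{=} \min\{n \ge 1 : \mathbf{S}_n \text{ occurs}\}$ is the index of the first coincidence epoch. The proof of Lemma \ref{uspex} identifies the event $\mathbf{S}_n$ with $\{\mathscr{U}_1 \ge \kappa,\, \ldots,\, \mathscr{U}_{n-1} \ge \kappa,\, \mathscr{U}_n < \kappa\}$, so $N$ is geometrically distributed with parameter $\kappa$; in particular $\mathbf{E}\,N = 1/\kappa < \infty$, since $\kappa > 0$ by Proposition \ref{kappa}.

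Next I would identify the conditional laws of the increments $\zeta_i = \theta_i - \theta_{i-1}$ on $\{N = n\}$. From Case 3 in Section \ref{constr}, each such $\zeta_i$ (for $i \ge 1$) is of the form $\Xi(\mathscr{U}_i, \mathscr{U}_i', \mathscr{U}_i'')$, which on $\{\mathscr{U}_i \ge \kappa\}$ is distributed according to the CDF $\overline{\kappa}^{-1}\Psi$ and on $\{\mathscr{U}_i < \kappa\}$ according to $\kappa^{-1}\Phi$. Hence on $\{N = n\}$ the increments $\zeta_1, \ldots, \zeta_{n-1}$ are i.i.d.\ with CDF $\overline{\kappa}^{-1}\Psi$, while $\zeta_n$ has CDF $\kappa^{-1}\Phi$; all are mutually independent and independent of $\zeta_0 \sim F_r$. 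Setting $\alpha \stackrel{\rm def}{=} \int_0^\infty s\, d(\kappa^{-1}\Phi(s))$ and $\beta \stackrel{\rm def}{=} \int_0^\infty s\, d(\overline{\kappa}^{-1}\Psi(s))$, this gives
$$\mathbf{E}\bigl[\widetilde{\tau}(r) \,\big|\, N = n\bigr] = \mathbf{E}\,\zeta_0 + (n-1)\beta + \alpha.$$

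The decisive step is the mixture identity $F(s) = \Phi(s) + \Psi(s)$ from Remark \ref{kley}, which forces $\mu = \kappa\alpha + \overline{\kappa}\beta$ and in particular ensures both $\alpha, \beta < \infty$. Summing over $n$ with $\mathbf{P}\{N = n\} = \kappa\,\overline{\kappa}^{\,n-1}$ and using $\sum_{n=1}^\infty (n-1)\kappa\,\overline{\kappa}^{\,n-1} = \overline{\kappa}/\kappa$,
$$\mathbf{E}\,\widetilde{\tau}(r) = \mathbf{E}\,\zeta_0 + \alpha + \frac{\overline{\kappa}}{\kappa}\beta = \mathbf{E}\,\zeta_0 + \frac{\kappa\alpha + \overline{\kappa}\beta}{\kappa} = \mathbf{E}\,\zeta_0 + \frac{\mu}{\kappa}.$$
Since $\mathbf{E}\,\zeta_0 = (1-F(r))^{-1}\int_r^\infty (1-F(u))\,du \le \mu/(1-F(r)) < \infty$ under Key Condition, this yields $\mathbf{E}\,\widetilde{\tau}(r) < \infty$. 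I expect the main obstacle to be precisely that the increments $\zeta_1, \ldots, \zeta_N$ are \emph{not} identically distributed conditional on $N$ — the terminal one is drawn from $\kappa^{-1}\Phi$ rather than $\overline{\kappa}^{-1}\Psi$ — which forbids a naive application of Wald's identity; the mixture identity $F = \Phi + \Psi$ of Remark \ref{kley} is exactly the algebraic fact that makes the two types of contributions recombine cleanly into $\mu$.
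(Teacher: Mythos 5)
Your proof is correct and uses the same raw ingredients as the paper — the partition $\{\mathbf{S}_n\}$ with $\mathbf{P}(\mathbf{S}_n)=\kappa\,\overline{\kappa}^{\,n-1}$, and the independence of $\zeta_i$ from $\mathscr{S}_j$ for $j\ne i$ — but it carries out the computation differently. The paper expands $\widetilde\tau(r)=\zeta_0+\sum_n\mathbf{1}(\mathbf{S}_n)\sum_{i\le n}\zeta_i$, reorders the double sum over the increment index $i$, and then invokes the inequality $\mathbf{E}\bigl(\zeta_i\,\mathbf{1}(\mathscr{S}_i)\bigr)\le\mathbf{E}\,\zeta_i$ (from the Proposition preceding the proof) together with the independence, arriving only at the upper bound $\mathbf{E}\,\zeta_0+2\kappa^{-1}\mathbf{E}\,\zeta_1$. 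You instead condition on the geometric index $N$, identify the conditional increment laws ($\overline{\kappa}^{-1}\Psi$ for the non-terminal increments, $\kappa^{-1}\Phi$ for the terminal one), and recombine via the mixture identity $\mu=\kappa\alpha+\overline{\kappa}\beta$, obtaining the exact value $\mathbf{E}\,\zeta_0+\kappa^{-1}\mu$, which is tighter by a factor of two on the second term. So the route is genuinely cleaner; for the purposes of the lemma (finiteness) both suffice.

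Two minor caveats. First, your closing remark that the non-identical conditional laws ``forbid a naive application of Wald's identity'' is misguided: Wald's identity only needs the \emph{unconditional} $\zeta_i$ to be i.i.d.\ with finite mean and $\{N\ge i\}$ to be independent of $\zeta_i,\zeta_{i+1},\ldots$, both of which hold here (the latter because $\{N\ge i\}$ is determined by $\mathscr{U}_1,\ldots,\mathscr{U}_{i-1}$ alone); Wald then returns $\mathbf{E}\,N\cdot\mu=\kappa^{-1}\mu$ immediately, the same answer your conditioning yields. Your mixture identity $\mu=\kappa\alpha+\overline{\kappa}\beta$ is in effect a by-hand verification of Wald in this setting. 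Second, when $\kappa=1$ the quantity $\beta=\int s\,\mathrm{d}\bigl(\overline{\kappa}^{-1}\Psi(s)\bigr)$ is formally ill-defined; this is harmless since $\mathbf{P}\{N\ge2\}=0$, but the paper treats $\kappa=1$ as a separate one-line case, which you may want to do as well.
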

To prove Lemma \ref{strogiy} we need the following elementary
\begin{proposition} Let $\xi$ be a non-negative random variable, and let $\mathscr{E}_1$ and $\mathscr{E}_2$ be an events such that the random variables $\xi$ and $ \mathbf{1} (\mathscr{E}_2)$ are independent. Then
 \begin{equation} \label{uslo} 
 \begin{array}{l}
 \mathbf{E} \Big(\xi \times \mathbf{1} (\mathscr{E}_1)\Big)\le \mathbf{E} \, \xi;
 \\
 \\
 \mathbf{E} \Big(\xi \times \mathbf{1} (\mathscr{E}_2)\Big)= \mathbf{E} \, \xi\, \mathbf{P} (\mathscr{E}_2).
 \end{array}
 \end{equation}
\end{proposition}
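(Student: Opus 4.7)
The plan is to observe that both assertions in (\ref{uslo}) are standard and follow directly from elementary properties of the Lebesgue integral: monotonicity for the first, and the product-formula for independent random variables for the second. No measure-theoretic subtleties arise because $\xi \ge 0$ (so all expectations are well-defined in $[0,+\infty]$) and $\mathbf{1}(\mathscr{E}_i)$ takes values in $\{0,1\}$.

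For the first inequality, I would start from the pointwise bound
\[
0 \;\le\; \xi(\omega)\,\mathbf{1}(\mathscr{E}_1)(\omega) \;\le\; \xi(\omega) \qquad \text{for every } \omega \in \Omega,
\]
which holds because $\xi \ge 0$ and $\mathbf{1}(\mathscr{E}_1) \in \{0,1\}$. Applying monotonicity of the expectation yields $\mathbf{E}\bigl(\xi\,\mathbf{1}(\mathscr{E}_1)\bigr) \le \mathbf{E}\,\xi$, which is the first line of (\ref{uslo}). Note that this step uses no hypothesis about $\mathscr{E}_1$ beyond its being an event.

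For the identity, I would use independence. Since $\xi$ and $\mathbf{1}(\mathscr{E}_2)$ are independent non-negative random variables, the standard product formula for independent random variables gives
\[
\mathbf{E}\bigl(\xi\,\mathbf{1}(\mathscr{E}_2)\bigr) \;=\; \mathbf{E}\,\xi \cdot \mathbf{E}\,\mathbf{1}(\mathscr{E}_2).
\]
Since $\mathbf{E}\,\mathbf{1}(\mathscr{E}_2) = \mathbf{P}(\mathscr{E}_2)$ by definition of the indicator, this produces the second line of (\ref{uslo}). If one wishes to avoid citing the product formula as a black box, it may be checked directly by splitting on the disjoint events $\mathscr{E}_2$ and $\overline{\mathscr{E}_2}$:
\[
\mathbf{E}\bigl(\xi\,\mathbf{1}(\mathscr{E}_2)\bigr) = \mathbf{E}\bigl(\xi \mid \mathscr{E}_2\bigr)\,\mathbf{P}(\mathscr{E}_2) = \mathbf{E}\,\xi \cdot \mathbf{P}(\mathscr{E}_2),
\]
where the second equality uses the assumed independence ($\mathbf{E}(\xi \mid \mathscr{E}_2) = \mathbf{E}\,\xi$).

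There is no real obstacle here: the proposition is a lemma collecting two textbook facts in the form that will be convenient for the proof of Lemma \ref{strogiy}. The only point requiring a word of care is the possibility $\mathbf{E}\,\xi = +\infty$; in that case the first inequality becomes the trivial bound $\mathbf{E}\bigl(\xi\,\mathbf{1}(\mathscr{E}_1)\bigr) \le +\infty$, and in the identity one interprets the product $\mathbf{E}\,\xi \cdot \mathbf{P}(\mathscr{E}_2)$ in $[0,+\infty]$ with the convention $0\cdot\infty = 0$, which is consistent with the direct computation.
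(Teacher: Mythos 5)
Your proof is correct; the paper itself offers no proof, simply labeling the proposition ``elementary'' and moving directly on to the proof of Lemma \ref{strogiy}, so there is nothing to compare against. Your argument (monotonicity of expectation for the inequality, the product formula for independent non-negative random variables for the identity, with a sensible remark about the $\mathbf{E}\,\xi = +\infty$ case) is exactly the standard proof the author was implicitly relying on.
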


\begin{proof}[of Lemma \ref{strogiy}]
Let us recall, that $ \mathbf{P} (\mathbf{S}_{n})=\kappa\,\overline{\kappa}\,^n$, $\zeta_i \stackrel{{\mathrm{def}}}{=\!\!\!=}  \theta_i-\theta_{i-1}$, and for all $i\neq j$ the random variable $\zeta_i$ does not depend on the random event $\mathscr{S}_j$.

Then from (\ref{uslo}) we have for $\kappa\in(0,1)$:
 \begin{equation} \label{zvlab5} 
 \begin{array} {l}
\mathbf{E}\, \widetilde{ \tau}(r) = \mathbf{E} \, \zeta_0+ \mathbf{E} \Big(\mathbf{1} (\mathbf{S} _1)\zeta_1\Big) +
 \mathbf{E} \Big(\mathbf{1} (\mathbf{S}_2)(\zeta_1+ \zeta_2) \Big ) +
 \\ \\
 \hspace{1.5cm}+\mathbf{E} \Big(\mathbf{1} (\mathbf{S}_3)(\zeta_1+ \zeta_2+ \zeta_3)\Big)+ \ldots + \mathbf{E}\left(\mathbf{1}(\mathbf{S}_n) \displaystyle \sum\limits_{i=1}^n\zeta_i\right)+\ldots=
 \\ \\
 \hspace{0.5cm}= \mathbf {E} \,\zeta_0+ \displaystyle   \sum\limits _{i=1}^\infty \left( \mathbf {E} \left(\zeta_i\times \left( \mathbf{1} (\mathbf{S}_i)+  \displaystyle   \sum\limits _{j=i+1}^\infty  \mathbf{1} (\mathbf{S}_j)\right)\right)\right)\le
 \\ \\
 \hspace{0.5cm}\le \mathbf {E} \,\zeta_0+ \displaystyle   \sum\limits _{i=1}^\infty \left( \mathbf {E} \,\zeta_i\times \left(\prod\limits_{\ell=1}^{i-1} \mathbf{P} \left(\overline{\mathscr{S}} _\ell\right)\times\left(1+ \phantom{\prod\limits_{\ell=1}^{i-1}}\right.\right.\right.
 \\ \\
 \hspace{3.5cm}\left.\left.\left.+ \displaystyle  \sum\limits _{j=i+1}^\infty \left(\prod\limits_{\ell=i+1}^{j-1} \mathbf{P} \left(\overline{\mathscr{S}}_\ell \right)\prod\limits_{\ell=j}^\infty \mathbf{P} \left({\mathscr{S}}_\ell\right) \right)\right)\right)\right)=
 \\ \\
 \hspace{0.5cm} =\mathbf{E} \, \zeta_0+  \displaystyle  \sum\limits _{i=1}^\infty\left( \mathbf{E} \, \zeta_i \times \overline{\kappa}{\,}^{i-1} \left(1+ \kappa \displaystyle  \sum \limits_{j=0} ^ \infty\overline{\kappa}{\,}^j \right)\right)=
 \mathbf{E} \, \zeta_0 +2 \kappa^{-1}\mathbf{E} \, \zeta_1,
 \end{array}
 \end{equation}
 here we put $ \displaystyle \prod\limits_{\ell=k}^{k-1}(\cdot) \stackrel{{\mathrm{def}}}{=\!\!\!=}  1$.

If $\kappa=1$ then $ \mathbf{P} (\mathscr{S}_1)=1$.
Hence $ \mathbf {E} \,\widetilde{ \tau}(r) = \mathbf {E} \,\theta_1<\infty$ and Lemma \ref{strogiy} is proved. $\bullet$
\end{proof}

Thus, Lemmata \ref{sovprasp}--\ref{strogiy} imply the following
\begin{theorem}
{The paired process $\Big( \mathscr Z_t,\, t\ge0\Big)=\left(\left(Z_t, \widetilde Z_t\right),\, t\ge0\right)$ constructed in Section \ref{constr} is a strong successful coupling for the backward renewal process $\Big(N_t,\, t\ge 0\Big)$.}
\end{theorem}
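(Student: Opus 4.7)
The plan is to observe that the theorem is essentially a summary statement: it asserts that the triple of defining properties of a strong successful coupling (conditions \emph{1}, \emph{2\,}$'$, and \emph{3} from the introduction) all hold for the paired process built in Section \ref{constr}. All three have already been verified as separate lemmata, so my ``proof'' is really a bookkeeping step that packages the three lemmata together.

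Concretely, I would proceed as follows. First, recall that by definition a paired process $\left(\left(Z_t,\widetilde Z_t\right),\,t\ge 0\right)$ is a strong successful coupling for $(N_t,\,t\ge 0)$ and its stationary version $(\widetilde N_t,\,t\ge 0)$ precisely when (i) the marginals satisfy $Z_t\stackrel{\mathscr{D}}{=} N_t$ and $\widetilde Z_t\stackrel{\mathscr{D}}{=} \widetilde N_t$ for every $t\ge 0$, (ii) the coupling epoch $\widetilde\tau(r)=\inf\{t\ge 0:\,Z_t=\widetilde Z_t\}$ has finite expectation, and (iii) $Z_t=\widetilde Z_t$ for all $t\ge\widetilde\tau(r)$. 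Then I would invoke the already proved lemmata: Lemma \ref{sovprasp} is precisely (i); Lemma \ref{strogiy} gives $\mathbf{E}\,\widetilde\tau(r)<\infty$, which is (ii) (and incidentally re-derives Lemma \ref{uspex}, since finite expectation implies $\mathbf{P}\{\widetilde\tau(r)<\infty\}=1$); and Lemma \ref{posle} is exactly (iii). Putting these three facts together yields the theorem.

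Since everything substantive has been done in the lemmata, there is no real obstacle at this last step; the only thing to be careful about is to check that the notion of ``strong successful coupling'' as stated in the introduction (with conditions \emph{1}, \emph{2\,}$'$, \emph{3}) matches one-to-one with what the three lemmata give. In particular, the finite-expectation form of the coupling time (condition \emph{2\,}$'$) rather than mere a.s.~finiteness (condition \emph{2}) is what distinguishes a \emph{strong} successful coupling from an ordinary one, and it is Lemma \ref{strogiy} --- not merely Lemma \ref{uspex} --- that supplies this stronger property. The conclusion then follows immediately.
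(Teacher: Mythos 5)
Your proposal is correct and matches the paper's approach exactly: the paper simply states ``Thus, Lemmata \ref{sovprasp}--\ref{strogiy} imply the following'' before the theorem, which is precisely the bookkeeping you describe (Lemma \ref{sovprasp} gives condition \emph{1}, Lemma \ref{strogiy} gives condition \emph{2\,}$'$, and Lemma \ref{posle} gives condition \emph{3}). Your observation that Lemma \ref{strogiy} rather than Lemma \ref{uspex} is what makes the coupling \emph{strong} is also accurate and worth noting.
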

\section{Estimation of the coupling epoch of the sta\-ti\-o\-na\-ry successful coupling of the backward renewal process.}

\begin{lemma}\label{poly} 
Let $\Big(N_t,\, t\ge 0\Big)$ be a backward renewal process which satisfies Key Condition with the initial state $N_0=r$, and $\mu_{0,K} \stackrel{{\mathrm{def}}}{=\!\!\!=}  \mathbf {E} (\zeta_0)^K<\infty$, \linebreak$\mu_{K} \stackrel{{\mathrm{def}}}{=\!\!\!=}  \mathbf {E} (\zeta_1)^K<\infty$ for some $K\ge 1$.

Then for the stationary coupling $\Big( \mathscr Z_t,\, t\ge0\Big)$ constructed by the schema exposed in Section \ref{constr} for all $k\in[1,K]$ we have
$$
  \mathbf {E} (\widetilde{\tau}(r))^k= C(k,r)<\infty,
$$
where $\widetilde{\tau}(r) \stackrel{{\mathrm{def}}}{=\!\!\!=}  \inf\left\{t\ge0:\, Z_t=\widetilde{Z}_t\Big|\,N_0=r\right\}$, and
$$
\begin{array}{l}
 C(k,r)\le \mu_{0,k} \kappa  \displaystyle  \sum \limits_{n=1} ^ \infty \left((n+1)^{k-1} \overline{\kappa}{\,}^{n-1} \right)+
 \\
 \hspace{1.2cm} + \mu_k  \displaystyle  \sum \limits_{n=1} ^ \infty \left(\left(\kappa n(n+2)^{k-1} + \overline{\kappa}\,(n+1)^{k-1} \right) \overline{\kappa}{\,}^{n-1} \right) \stackrel{{\mathrm{def}}}{=\!\!\!=}  \widehat{C}(k,\zeta_0).
\end{array}
$$
\end{lemma}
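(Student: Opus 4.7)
The plan is to extend the first-moment argument of Lemma~\ref{strogiy} to general $k$ by combining Jensen's inequality with the same decomposition over the partition $\{\mathbf{S}_n\}_{n\ge 1}$ used there. From Lemmata~\ref{uspex} and \ref{posle} we have $\widetilde\tau(r) = \theta_n = \zeta_0 + \zeta_1 + \cdots + \zeta_n$ on $\mathbf{S}_n$, with $\mathbf{P}(\mathbf{S}_n) = \kappa\overline\kappa^{n-1}$, and hence
\[
\mathbf{E}(\widetilde\tau(r))^k \;=\; \sum_{n\ge 1}\mathbf{E}\bigl[\mathbf{1}(\mathbf{S}_n)\,\theta_n^k\bigr].
\]
Applying the elementary power-mean inequality $(\sum_{i=0}^n a_i)^k \le (n+1)^{k-1}\sum_{i=0}^n a_i^k$ (valid for $k\ge 1$) then reduces the problem to estimating each $\mathbf{E}[\mathbf{1}(\mathbf{S}_n)\zeta_i^k]$.

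These terms will be controlled in three cases that parallel Lemma~\ref{strogiy}. For $i=0$, the random variable $\zeta_0$ is independent of every $\mathscr{S}_j$, so by the second half of (\ref{uslo}) one obtains the exact identity $\mathbf{E}[\mathbf{1}(\mathbf{S}_n)\zeta_0^k] = \mu_{0,k}\kappa\overline\kappa^{n-1}$. For $1\le i<n$, the factorization $\mathbf{1}(\mathbf{S}_n) = \mathbf{1}(\mathscr{S}_n)\prod_{j<n}\mathbf{1}(\overline{\mathscr{S}}_j)$, combined with the independence of $\zeta_i$ from $\{\mathscr{S}_j:j\ne i\}$ and both halves of (\ref{uslo}), gives $\mathbf{E}[\mathbf{1}(\mathbf{S}_n)\zeta_i^k] = \mathbf{E}[\zeta_i^k\mathbf{1}(\overline{\mathscr{S}}_i)]\cdot\kappa\cdot\overline\kappa^{n-2}\le \mu_k\kappa\overline\kappa^{n-2}$, and analogously $\mathbf{E}[\mathbf{1}(\mathbf{S}_n)\zeta_n^k]\le \mu_k\overline\kappa^{n-1}$.

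Summing these bounds, multiplying by $(n+1)^{k-1}$, and summing over $n\ge 1$ then produces three series that must be arranged into the stated form of $\widehat C(k,\zeta_0)$. The $i=0$ contribution directly yields the first piece $\mu_{0,k}\kappa\sum_n(n+1)^{k-1}\overline\kappa^{n-1}$. The $n-1$ middle contributions, after the substitution $m=n-1$ (which converts $(n+1)^{k-1}(n-1)\overline\kappa^{n-2}$ into $m(m+2)^{k-1}\overline\kappa^{m-1}$), supply the asymmetric weight $(n+2)^{k-1}$ in the $\mu_k\kappa n(n+2)^{k-1}\overline\kappa^{n-1}$ summand. Combining with the $i=n$ contribution via the splitting $\mu_k = \mu_k\kappa + \mu_k\overline\kappa$ and the elementary inequality $(n+1)^{k-1}\le n(n+2)^{k-1}$ (valid for $n\ge 1$, $k\ge 1$) then produces the residual $\mu_k\overline\kappa(n+1)^{k-1}\overline\kappa^{n-1}$ summand, completing the identification with $\widehat C(k,\zeta_0)$. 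The degenerate case $\kappa=1$ is handled separately exactly as in Lemma~\ref{strogiy}: then $\mathbf{P}(\mathbf{S}_1)=1$, so $\widetilde\tau(r) = \zeta_0+\zeta_1$ and the two-term power-mean bound gives $\mathbf{E}(\widetilde\tau(r))^k\le 2^{k-1}(\mu_{0,k}+\mu_k)<\infty$.

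The main technical point is the careful bookkeeping of the three types of terms and the re-indexing that produces the asymmetric weights $(n+1)^{k-1}$ and $(n+2)^{k-1}$ appearing in $\widehat C$; the probabilistic content is just the conditional-independence structure already used in Lemma~\ref{strogiy}, now applied to $\mathbf{E}[\zeta_i^k\mathbf{1}(\cdot)]$ rather than to $\mathbf{E}[\zeta_i\mathbf{1}(\cdot)]$. Finiteness of $C(k,r)$ is then automatic, because the geometric decay $\overline\kappa^{n-1}$ dominates the polynomial prefactors $(n+1)^{k-1}$ and $n(n+2)^{k-1}$ in each of the three series whenever $\overline\kappa<1$.
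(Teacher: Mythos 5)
Your decomposition over the events $\mathbf{S}_n$, the power-mean inequality, and the independence-based estimates $\mathbf{E}[\zeta_0^k\mathbf{1}(\mathbf{S}_n)]=\mu_{0,k}\kappa\overline\kappa^{n-1}$, $\mathbf{E}[\zeta_i^k\mathbf{1}(\mathbf{S}_n)]\le\mu_k\kappa\overline\kappa^{n-2}$ for $1\le i<n$, and $\mathbf{E}[\zeta_n^k\mathbf{1}(\mathbf{S}_n)]\le\mu_k\overline\kappa^{n-1}$ all follow the paper's strategy and are computed correctly. However, the closing step -- the claim that splitting $\mu_k=\mu_k\kappa+\mu_k\overline\kappa$ together with $(n+1)^{k-1}\le n(n+2)^{k-1}$ \emph{completes the identification with} $\widehat C(k,\zeta_0)$ -- has a gap. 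After the split, the $i=n$ contribution is $\mu_k\kappa(n+1)^{k-1}\overline\kappa^{n-1}+\mu_k\overline\kappa(n+1)^{k-1}\overline\kappa^{n-1}$, and the second piece does match the $\overline\kappa(n+1)^{k-1}$ part of $\widehat C$. But the first piece is strictly positive and cannot be absorbed into the middle contribution by $(n+1)^{k-1}\le n(n+2)^{k-1}$: the middle contribution after re-indexing already equals $\mu_k\kappa\sum_n n(n+2)^{k-1}\overline\kappa^{n-1}$, which is exactly the target coefficient, so adding the bound for the first piece would double-count it. Summing your three series verbatim therefore gives $\mu_{0,k}\kappa\sum(n+1)^{k-1}\overline\kappa^{n-1}+\mu_k\kappa\sum n(n+2)^{k-1}\overline\kappa^{n-1}+\mu_k\sum(n+1)^{k-1}\overline\kappa^{n-1}$, which exceeds $\widehat C(k,\zeta_0)$ by $\mu_k\kappa\sum_n(n+1)^{k-1}\overline\kappa^{n-1}>0$.

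The source of the loss is that you bound $a:=\int_0^\infty s^k\,\mathrm{d}\Phi(s)$ and $b:=\int_0^\infty s^k\,\mathrm{d}\Psi(s)$ separately by $\mu_k$, whereas the sharper constant $\widehat C$ uses the identity $a+b=\int_0^\infty s^k\,\mathrm{d}F(s)=\mu_k$ (since $\Phi+\Psi=F$). Keeping $a$ and $b$ as variables, the middle plus $i=n$ contribution is $b\kappa A + aB$ with $A=\sum_n n(n+2)^{k-1}\overline\kappa^{n-1}$ and $B=\sum_n(n+1)^{k-1}\overline\kappa^{n-1}$; then $b\kappa A+aB\le(a+b)\kappa A+(a+b)\overline\kappa B=\mu_k\kappa A+\mu_k\overline\kappa B$ follows from $A\ge B$ (which holds because $n(n+2)^{k-1}\ge(n+1)^{k-1}$ for $n\ge1$, $k\ge1$). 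That is the step needed to land exactly on $\widehat C$; your version produces a correct finiteness conclusion but a strictly larger constant than the one stated. (Incidentally, the paper's own inline estimates in (\ref{nervo}) and (\ref{jen}) have inconsistent powers of $\overline\kappa$ and do not visibly invoke $a+b=\mu_k$ either, so your careful bookkeeping actually exposes a genuine sloppiness in the source; but as a proof of the lemma as stated, the identification step needs the $a+b=\mu_k$ argument.)
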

\begin{proof}
Suppose that $\widetilde{\tau}(r)=\theta_\nu$. 

Then by (\ref{uslo}) for $1 \le i< \nu$ we obtain
\begin{equation}\label{nervo}
  \begin{array}{l}
     EE\Big((\zeta_i)^k \mathbf{1} (\mathbf{S}_\nu)\Big)= \mathbf{P} (\mathbf{S}_\nu) \displaystyle  \int \limits _0^\infty s^k \, \mathrm{d}\,  \left(\overline{\kappa}\,^{-1}\Psi(s)\right)\le \mu_k \overline{\kappa}\,^{\nu-2}\kappa;
     \\
     \\
      \mathbf {E} \Big((\zeta_\nu)^k \mathbf{1} (\mathbf{S}_\nu)\Big)=
 \mathbf{P} (\mathbf{S}_\nu) \displaystyle  \int \limits _0^\infty s^k \, \mathrm{d}\,  \left(\kappa^{-1} \Phi(s)\right)\le \mu_k \overline{\kappa}\,^{\nu};
\\
\\
\mbox{and } \mathbf {E} \Big((\zeta_0)^k \mathbf{1} (\mathbf{S}_\nu)\Big)= \kappa\,\overline{\kappa}\,^{\nu} \mu_{0,k}.
  \end{array}
\end{equation}

Using the inequalities (\ref{nervo}) as well as Jensen's inequality for $k \ge 1$ and $a_i\ge 0$ in the form $ \left(\sum \limits_{i=1} ^n a_i \right)^k \le n^{k-1} \sum \limits_{i=1} ^n a_i^k\;$ we  result in the expression similar to formula (\ref{zvlab5}):
\begin{equation}\label{jen} 
 \begin{array} {l}
 \mathbf{E} (\widetilde{ \tau} (r))^k = \mathbf {E}  \left( \displaystyle  \sum\limits _{n=1}^\infty \left( \mathbf{1} (\mathbf{S}_n)\left(\zeta_0+ \displaystyle   \sum\limits _{i=1}^n\zeta_i\right)^k\right)\right) \le
 \\ \\
 \hspace{1.5cm}\le \mathbf{E} \left( \displaystyle  \sum \limits_{n=1} ^ \infty \left((n+1)^{k-1} \left(\zeta_0^k+ \displaystyle  \sum \limits_{i=1} ^n \zeta_i^k \right) \mathbf{1} (\mathbf{S} _n ) \right) \right) =
 \\ \\
 \hspace{1.5cm}= \displaystyle  \sum\limits _{n=1}^\infty\left((n+1)^{k-1} \left( \mathbf {E} \left((\zeta_0)^k \mathbf{1} (\mathbf{S}_n)\right)+ \right.\right.
 \\
 \hspace{1.6cm}\phantom{ \sum\limits _{1\le i\le n-1}}+ \displaystyle   \sum\limits _{1\le i\le n-1} \mathbf {E} \left((\zeta_i)^k \mathbf{1} (\mathbf{S}_n)\right)+ \mathbf {E} \left((\zeta_n)^k \mathbf{1} (\mathbf{S}_n)\right)\le
 \\ \\
 \hspace{1.5cm}\le  \displaystyle  \sum\limits _{n=1}^\infty (n+1)^{k-1}\left(\kappa \, \overline{\kappa}\,^{n-1}\mu_{0,k}+ (n-1)\mu_k \kappa \, \overline{\kappa}\,^{n-1}+ \mu_k\, \overline{\kappa}\,^{n}\right)=
 \\ \\
 \hspace{1.5cm} = \mu_{0,k}\, \kappa  \displaystyle  \sum \limits_{n=1} ^ \infty \left((n+1)^{k-1} \overline{\kappa}{\,}^{n-1} \right)+
 \\
 \hspace{3.5cm} + \mu_k  \displaystyle  \sum \limits_{n=1} ^ \infty \left(\left(\kappa n(n+2)^{k-1} + \overline{\kappa}\,(n+1)^{k-1} \right) \overline{\kappa}{\,}^{n-1} \right).
 \end{array}
\end{equation}

So, Lemma \ref{poly} is proved. $\bullet$
\end{proof}

From now on we introduce denotations $ \mathbf {E} \,e^{\alpha\zeta_0} \stackrel{{\mathrm{def}}}{=\!\!\!=}  \varepsilon_{0,\alpha}$; ~ $ \mathbf {E} \,e^{\alpha\zeta_1} \stackrel{{\mathrm{def}}}{=\!\!\!=}  \varepsilon_{\alpha}$; ~ $\widetilde{\varepsilon}_\alpha \stackrel{{\mathrm{def}}}{=\!\!\!=}   \displaystyle  \int \limits _0^\infty e^{\alpha s} \, \mathrm{d}\, \Psi(s)$.
\begin{remark}
If \emph{Key Condition} is satisfied then $\widetilde{\varepsilon}_0=(1-\kappa)<1$, and there exists $\beta>0$ such that $\widetilde{\varepsilon}_\beta<1$.
\end{remark}
\begin{lemma}\label{exp} 
Let $\Big(N_t,\, t\ge 0\Big)$ be a backward renewal process which satisfies  Key Condition with the initial state $N_0=r$, and $$ \mathbf {E} \,e^{a\zeta_0}=\varepsilon_{0,a}<\infty,\qquad \mathbf {E} \,e^{a\zeta_1}=\varepsilon_{a}<\infty$$ for some $a> 0$.

Suppose $\widetilde{\varepsilon}_\beta<1$ for some $\beta>0$.

Then for the stationary coupling $\Big( \mathscr Z_t,\, t\ge0\Big)$ constructed by the sche\-ma  exposed in Section \ref{constr}  for all $\gamma\in(0,\beta)$  we have
$$
  \mathbf {E} \,e^{\gamma\, \widetilde{\tau}(r)}= \mathscr{C}(\gamma,r)<\infty,
$$
where $\widetilde{\tau}(r) \stackrel{{\mathrm{def}}}{=\!\!\!=}  \inf\left\{t\ge0:\, Z_t=\widetilde{Z}_t\Big|\,N_0=r\right\}$, and
$$
\begin{array}{l}
 \mathscr{C}(\gamma,r)\le \displaystyle \frac{\varepsilon_{0,\beta}\varepsilon_\beta }{1-\widetilde{\varepsilon}_\beta} \stackrel{{\mathrm{def}}}{=\!\!\!=}  \widehat{\mathscr{C}}(\gamma,\zeta_0).
\end{array}
$$
\end{lemma}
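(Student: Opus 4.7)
The plan is to mirror the proof of Lemma \ref{poly} but replace the polynomial $s^k$ by the exponential $e^{\gamma s}$. The decisive advantage of exponentials is that they factorize over the sum $\theta_n = \zeta_0 + \sum_{i=1}^n \zeta_i$, so Jensen's inequality (the source of the awkward combinatorial factors $(n+1)^{k-1}$ in Lemma \ref{poly}) is not required, and the sum over $n$ reduces to a geometric series.

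First, using Lemma \ref{uspex} (the events $\{\mathbf{S}_n\}_{n\ge 1}$ partition $\Omega$ up to a null set) and the equality $\widetilde\tau(r)=\theta_\nu$ on $\mathbf{S}_\nu$, I would write
$$
\mathbf{E}\,e^{\gamma\,\widetilde\tau(r)} = \sum_{n=1}^\infty \mathbf{E}\!\left(e^{\gamma\theta_n}\mathbf{1}(\mathbf{S}_n)\right) = \sum_{n=1}^\infty \mathbf{E}\!\left(e^{\gamma\zeta_0}\prod_{i=1}^n e^{\gamma\zeta_i}\,\mathbf{1}(\mathbf{S}_n)\right).
$$
The key independence, read off from the construction in Section \ref{constr}: $\zeta_0$ is built from $\mathscr{U}_0$ and is independent of everything else; for $i\ge 1$, $\zeta_i$ and the indicator of $\mathscr{S}_i$ are both functions of the independent triple $(\mathscr{U}_i,\mathscr{U}_i',\mathscr{U}_i'')$; and $\mathbf{S}_n$ is the intersection over disjoint time steps of such events. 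Hence the expectation factorizes.

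Next I would evaluate the three kinds of factors. Using Denotation 8 together with the identities $\Phi=\kappa\cdot(\kappa^{-1}\Phi)$ and $\Psi=\overline\kappa\cdot(\overline\kappa{\,}^{-1}\Psi)$ from Remark \ref{kley}, and the changes of variable $v=\overline\kappa\,\mathscr{U}_i''$ then $v=\Psi(s)$ (and analogously for $\Phi$), one gets
$$
\mathbf{E}\!\left(e^{\gamma\zeta_i}\mathbf{1}(\overline{\mathscr{S}}_i)\right)=\int_0^\infty e^{\gamma s}\,d\Psi(s)=\widetilde\varepsilon_\gamma, \qquad \mathbf{E}\!\left(e^{\gamma\zeta_n}\mathbf{1}(\mathscr{S}_n)\right)=\int_0^\infty e^{\gamma s}\,d\Phi(s)\le \varepsilon_\gamma,
$$
the latter bound coming from $\Phi\le F$ (since $\varphi\le F'$ wherever $F'$ exists), while $\mathbf{E}\,e^{\gamma\zeta_0}=\varepsilon_{0,\gamma}$. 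Multiplying and summing gives
$$
\mathbf{E}\,e^{\gamma\widetilde\tau(r)}\le \varepsilon_{0,\gamma}\,\varepsilon_\gamma\sum_{n=1}^\infty \widetilde\varepsilon_\gamma^{\,n-1}=\frac{\varepsilon_{0,\gamma}\,\varepsilon_\gamma}{1-\widetilde\varepsilon_\gamma}.
$$

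Finally, to obtain the bound in the form stated in the lemma I would invoke monotonicity in the exponent: for $0<\gamma<\beta$ we have $\varepsilon_{0,\gamma}\le\varepsilon_{0,\beta}$, $\varepsilon_\gamma\le\varepsilon_\beta$, and $\widetilde\varepsilon_\gamma\le\widetilde\varepsilon_\beta<1$, whence $\tfrac{\varepsilon_{0,\gamma}\varepsilon_\gamma}{1-\widetilde\varepsilon_\gamma}\le \tfrac{\varepsilon_{0,\beta}\varepsilon_\beta}{1-\widetilde\varepsilon_\beta}=\widehat{\mathscr C}(\gamma,\zeta_0)$. The degenerate case $\kappa=1$ (where $\widetilde\varepsilon_\beta=0$ and $\mathbf{P}(\mathbf{S}_1)=1$, so $\widetilde\tau(r)=\zeta_0+\zeta_1$) is checked separately and matches the same formula. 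The only delicate point, and the step I would spend most care on, is the factorization identity above: one must verify from Case 3 of the construction that $\mathscr{S}_i=\{\mathscr{U}_i<\kappa\}$ (so $\mathbf{1}(\overline{\mathscr{S}}_i)$ cuts $\zeta_i$ to the piece with law $\overline\kappa{\,}^{-1}\Psi$) and that the triples across different $i$ are genuinely independent — exactly the input that makes the geometric-series summation legitimate.
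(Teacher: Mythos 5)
Your proof is correct and follows essentially the same route as the paper's: decompose over the events $\mathbf{S}_n$, factorize $e^{\gamma\theta_n}$ over the independent increments $\zeta_0,\zeta_1,\dots,\zeta_n$, identify the factors $\mathbf{E}\big(e^{\gamma\zeta_i}\mathbf{1}(\overline{\mathscr{S}}_i)\big)=\widetilde\varepsilon_\gamma$ and $\mathbf{E}\big(e^{\gamma\zeta_n}\mathbf{1}(\mathscr{S}_n)\big)\le\varepsilon_\gamma$ via the mixture decomposition $F=\Phi+\Psi$, and sum the geometric series. The only (trivial) divergence is cosmetic: the paper evaluates the chain directly at $\beta$ and tacitly uses $e^{\gamma s}\le e^{\beta s}$, whereas you evaluate at $\gamma$ and then invoke monotonicity of $\varepsilon_{0,\cdot}$, $\varepsilon_\cdot$, $\widetilde\varepsilon_\cdot$; both close the gap $\gamma<\beta$ correctly. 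You have also correctly identified the one point on which the whole computation leans — the independence of the triples $(\mathscr{U}_i,\mathscr{U}_i',\mathscr{U}_i'')$ across $i$ and the fact that $\mathscr{S}_i$ and $\zeta_i$ are determined by the $i$-th triple while being independent of the others — and this is precisely the (somewhat informally argued) independence claim the paper itself relies on in Lemmata 2 and 3.
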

\begin{proof}
 Once more assume that $\widetilde{\tau}(r)=\theta_\nu$. Then for $1\le i< \nu$ we obtain
$$
 \mathbf {E} \left(e^{\beta\zeta_i}  \mathbf{1} \left(\overline{\mathscr{S}}_i\right ) \right)= \mathbf{P} \left(\overline{\mathscr{S}}_i\right)  \displaystyle  \int \limits _0^\infty e^{\beta s} \, \mathrm{d}\,  \left(\overline{\kappa}\,^{-1}\Psi(s)\right)= \widetilde{\varepsilon}_\beta
$$
and
$$
 \mathbf {E} \left(e^{\beta\zeta_\nu}  \mathbf{1} (\mathscr{S}_\nu)\right)= \mathbf{P} (\mathscr{S}_\nu) \displaystyle  \int \limits _0^\infty e^{\beta s} \, \mathrm{d}\,  \left(\kappa^{-1} \Phi(s)\right)\le \varepsilon_\beta .
$$
Hence
\begin{equation}\label{ocexp} 
 \begin{array} {l}
 \mathbf{E} (\widetilde{ \tau} (r))^k = \mathbf {E}  \left( \displaystyle  \sum\limits _{n=1}^\infty \left( \mathbf{1} (\mathbf{S}_n)\exp\left(\beta\left(\zeta_0+ \displaystyle  \sum\limits _{i=1}^n\zeta_i\right)\right)\right)\right) =
 \\ \\
\hspace{1.4cm} = \mathbf{E} \left( \displaystyle  \sum \limits_{n=1} ^ \infty \left(e^{\beta\zeta_0}e^{\beta\zeta_n} \mathbf{1} (\mathscr{S} _n )\prod\limits_{1\le i \le n-1}\left(e^{\beta\zeta_i} \mathbf{1} \left(\overline{\mathscr{S}}_i\right) \right) \right) \right) \le
 \\
 \\
 \hspace{6cm}\le  \displaystyle  \sum\limits _{n=1}^\infty \varepsilon_{0,\beta}\varepsilon_\beta \left(\widetilde{\varepsilon}_\beta\right)^{n-1} = \displaystyle \frac{\varepsilon_{0,\beta}\varepsilon_\beta }{1-\widetilde{\varepsilon}_\beta}.
 \end{array}
\end{equation}

Lemma \ref{exp} is proved. $\bullet$
\end{proof}
\begin{corollary}\label{corpoly} 
Let $\Big(N_t,\, t\ge 0\Big)$ be a backward renewal process which satisfies  Key Condition with the initial state $N_0=r$, and $\mu_{0,K} \stackrel{{\mathrm{def}}}{=\!\!\!=}  \mathbf {E} (\zeta_0)^K<\infty$, $\mu_{K} \stackrel{{\mathrm{def}}}{=\!\!\!=}  \mathbf {E} (\zeta_1)^K<\infty$ for some $K\ge 1$.

Then for all $t\ge 0$ and every $k\in[1,K]$ we have
$$
\left\| \mathscr{P} _t^r- \mathscr{P} \right\|_{TV}\le 2 \widehat{C}(k,\zeta_0)t^{-k},
$$
where $ \mathscr{P} _t^r(M) \stackrel{{\mathrm{def}}}{=\!\!\!=}   \mathbf{P} \{N_t\in M|N_0=r\}$, $ \mathscr{P} (M) \stackrel{{\mathrm{def}}}{=\!\!\!=}  \lim\limits _{t\to \infty}  \mathscr{P} _t^r(M)$, $M\in \mathscr{R}$, and $\zeta_0$ has a cumulative distribution function $F_r(s)$.
\end{corollary}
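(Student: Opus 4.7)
My plan is to combine the strong successful coupling built in Section \ref{konstr} with the moment bound of Lemma \ref{poly} via the standard coupling inequality, applied to the function $\varphi(s)=s^k$.

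First I would invoke the paired process $\Big(\mathscr{Z}_t,\,t\ge 0\Big)=\left(\left(Z_t,\widetilde{Z}_t\right),\,t\ge0\right)$ from Section \ref{constr}, recalling that by Lemma \ref{sovprasp} we have $Z_t\stackrel{\mathscr{D}}{=}N_t$ and $\widetilde{Z}_t\stackrel{\mathscr{D}}{=}\widetilde{N}_t$, while by Lemma \ref{posle} the two components agree from $\widetilde{\tau}(r)$ onwards. Consequently for every $M\in\sigma(\mathscr{R})$ and every $t\ge 0$, the coupling inequality (in the form (\ref{zvlab3})) with $\widetilde{Z}_0$ distributed according to the invariant measure $\mathscr{P}$ gives
$$
\left|\mathscr{P}_t^r(M)-\mathscr{P}(M)\right|=\left|\mathbf{P}\{Z_t\in M\}-\mathbf{P}\{\widetilde{Z}_t\in M\}\right|\le \mathbf{P}\{\widetilde{\tau}(r)>t\}.
$$

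Next I would apply Markov's inequality with $\varphi(s)=s^k$ for any $k\in[1,K]$. Since Lemma \ref{poly} supplies the moment bound $\mathbf{E}(\widetilde{\tau}(r))^k\le\widehat{C}(k,\zeta_0)$, I get
$$
\mathbf{P}\{\widetilde{\tau}(r)>t\}=\mathbf{P}\{(\widetilde{\tau}(r))^k>t^k\}\le \frac{\mathbf{E}(\widetilde{\tau}(r))^k}{t^k}\le \frac{\widehat{C}(k,\zeta_0)}{t^k}.
$$
Taking the supremum over $M\in\sigma(\mathscr{R})$ and multiplying by the usual factor of $2$ that converts pointwise deviation into the total variation norm yields the desired bound
$$
\left\|\mathscr{P}_t^r-\mathscr{P}\right\|_{TV}\le 2\,\widehat{C}(k,\zeta_0)\,t^{-k}.
$$

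There is essentially no obstacle here: the nontrivial analytic work has already been absorbed into Lemma \ref{poly} (which in turn rested on the construction of Section \ref{constr} and the control of the geometric decay of $\mathbf{P}(\mathbf{S}_n)=\kappa\,\overline{\kappa}^{n-1}$ via the Key Condition). The only minor point to verify explicitly is that the hypothesis $\mu_{0,K},\mu_K<\infty$ ensures that all $\mu_{0,k},\mu_k$ for $k\in[1,K]$ are finite by Lyapunov's inequality, so that Lemma \ref{poly} is indeed applicable for every such $k$; once this is checked, the three-line argument above completes the proof.
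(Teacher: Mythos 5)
Your proposal is correct and follows exactly the route the paper takes: the paper's own proof of Corollary~\ref{corpoly} is a one-line remark that it follows from Lemma~\ref{poly} together with the coupling inequality of Section~\ref{offer}, and you have simply unpacked that remark — invoking Lemmata~\ref{sovprasp} and \ref{posle} to justify the coupling inequality for the pair $\left(Z_t,\widetilde{Z}_t\right)$, then applying Markov's inequality with $\varphi(s)=s^k$ and the bound $\mathbf{E}(\widetilde{\tau}(r))^k\le\widehat{C}(k,\zeta_0)$ from Lemma~\ref{poly}. The closing observation about Lyapunov's inequality is harmless but unnecessary, since Lemma~\ref{poly} is already stated as yielding the bound for every $k\in[1,K]$ under the single hypothesis $\mu_{0,K},\mu_K<\infty$.
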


\begin{corollary}\label{corexp} 
Let $\Big(N_t,\, t\ge 0\Big)$ be a backward renewal process which satisfies  Key Condition with the initial state $N_0=r$, and for some $a> 0$
$$ \mathbf {E} \,e^{a\zeta_0}=\varepsilon_{0,a}<\infty,\qquad \mathbf {E} \,e^{a\zeta_1}=\varepsilon_{a}<\infty.
$$

Suppose $\widetilde{\varepsilon}_\beta<1$ for some $\beta>0$.

Then for all $t\ge 0$ and $\gamma\in(0,\beta)$  we derive an estimate
$$
\| \mathscr{P} _t^r- \mathscr{P} \|_{TV}\le 2 \widehat{\mathscr{C}}(\gamma,\zeta_0)e^{-\gamma t},
$$
where $ \mathscr{P} _t^r(M) \stackrel{{\mathrm{def}}}{=\!\!\!=}   \mathbf{P} \{N_t\in M|N_0=r\}$, $ \mathscr{P} (M) \stackrel{{\mathrm{def}}}{=\!\!\!=}  \lim\limits _{t\to \infty}  \mathscr{P} _t^r(M)$, $M\in \mathscr{R}$, and $\zeta_0$ has a cumulative distribution function $F_r(s)$.
\end{corollary}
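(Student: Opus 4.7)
The plan is to assemble ingredients that are already in place. Namely, Section \ref{constr} supplies a strong successful coupling $(\mathscr{Z}_t,t\ge 0)=((Z_t,\widetilde{Z}_t),t\ge 0)$ of the backward renewal process $(N_t)$ started at $r$ and its stationary version $(\widetilde{N}_t)$; Lemma \ref{exp} controls the exponential moment of the corresponding coupling epoch $\widetilde{\tau}(r)$; and the general coupling inequality of Section \ref{offer} converts this moment bound into a bound on the total variation distance. The corollary is then obtained in the same way Corollary \ref{corpoly} followed from Lemma \ref{poly}, simply with $\varphi(t)=e^{\gamma t}$ in place of $\varphi(t)=t^k$.

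Concretely, I would proceed in three short steps. First, fix $\gamma\in(0,\beta)$ and invoke Lemma \ref{exp} to obtain $\mathbf{E}\,e^{\gamma\widetilde{\tau}(r)}\le\widehat{\mathscr{C}}(\gamma,\zeta_0)<\infty$; finiteness is guaranteed by the hypotheses $\varepsilon_{0,a},\varepsilon_a<\infty$ and $\widetilde{\varepsilon}_\beta<1$. Second, use Lemmata \ref{sovprasp} and \ref{posle}: for every $A\in\sigma(\mathscr{R})$,
\[|\mathscr{P}_t^r(A)-\mathscr{P}(A)|=|\mathbf{P}\{Z_t\in A\}-\mathbf{P}\{\widetilde{Z}_t\in A\}|\le\mathbf{P}\{Z_t\neq\widetilde{Z}_t\}\le\mathbf{P}\{\widetilde{\tau}(r)>t\},\]
the last inequality because by Lemma \ref{posle} the two coordinates agree for $t\ge\widetilde{\tau}(r)$. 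Third, apply Markov's inequality with $\varphi(t)=e^{\gamma t}$ exactly as in the chain (\ref{lab1})--(\ref{zvlab3}): $\mathbf{P}\{\widetilde{\tau}(r)>t\}\le e^{-\gamma t}\mathbf{E}\,e^{\gamma\widetilde{\tau}(r)}$. Taking the supremum over $A$ and using the factor-of-two convention of the paper for the TV metric yields
\[\|\mathscr{P}_t^r-\mathscr{P}\|_{TV}\le 2e^{-\gamma t}\widehat{\mathscr{C}}(\gamma,\zeta_0).\]

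Since the genuine work was carried out in Lemma \ref{exp}, this corollary is essentially bookkeeping and I do not anticipate any serious obstacle. The only point requiring a line of care is the use of $\gamma<\beta$: the bound $\widehat{\mathscr{C}}(\gamma,\zeta_0)=\varepsilon_{0,\beta}\varepsilon_\beta/(1-\widetilde{\varepsilon}_\beta)$ is stated with $\beta$ rather than $\gamma$ in its right-hand side, but this is legitimate because $e^{\gamma s}\le e^{\beta s}$ for $s\ge 0$, so the same estimate (\ref{ocexp}) with $\beta$ in the exponent also bounds $\mathbf{E}\,e^{\gamma\widetilde{\tau}(r)}$. Hence the proposed argument reduces, once Lemma \ref{exp} is in hand, to a single chain of inequalities parallel to the proof of Corollary \ref{corpoly}.
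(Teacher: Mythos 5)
Your argument is correct and follows exactly the route the paper intends: the paper's own ``proof'' of Corollary~\ref{corexp} is the single sentence that it follows from Lemma~\ref{exp} and Section~\ref{offer}, and your three steps (moment bound from Lemma~\ref{exp}, coupling inequality via Lemmata~\ref{sovprasp} and~\ref{posle}, Markov's inequality with $\varphi(t)=e^{\gamma t}$) unpack precisely that chain. The clarifying remark on $\gamma<\beta$ versus the $\beta$ appearing in $\widehat{\mathscr{C}}(\gamma,\zeta_0)$ is accurate and welcome.
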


\begin{proof}
  Corollary \ref{corpoly} and  Corollary \ref{corexp} follow from Lemma \ref{poly}, Lemma \ref{exp} and Section \ref{offer}.
\end{proof}

\begin{remark}
The Corollary \ref{corpoly} improves the classical result: there exists the constant $C$ such that $\left\| \mathscr{P} _t^r- \mathscr{P} \right\|_{TV}\le C\,t^{-K+1}$ (see \cite{asm,bor,zvbib5,thor})
\end{remark}

\section{Application of the stationary coupling to \\ strong bounds for the convergence rate of the di\-s\-tri\-bu\-tion of the regenerative process}

\subsection{Introduction to regenerative processes}
Let us recall the definition of a regenerative process in continuous time.
\begin{definition}
Assume that the random process $\Big(X_t, \, t\ge0\Big)$ with the state space $(\mathscr{X},\sigma(\mathscr{X}))$ has continuous time parameter $t\in[0,+\infty)$.

Besides, we suppose that there exists a renewal process $\Big(R_t, \, t\ge 0\Big)$ with the renewal times $\{\theta_j\}_{{}_{j=0}}^{{}^\infty}$; as before, $\theta_i= \displaystyle  \sum\limits _{j=0}^i\zeta_j$, the random variables $\{\zeta_j\}_{{}_{j=0}}^{{}^\infty}$ are independent, and $\{\zeta_j\}_{{}_{j=1}}^{{}^\infty}$ are identically distributed.

Furthermore, the pair of processes $((X_t,R_t),\, t\ge 0)$ has the following property: for each $n \ge 0$, the post-$\theta_n$ process $\Big(X_{\theta_n + t},\, t \ge 0\Big)$ is independent of $(\theta_0,\ldots,\theta_n)$ (or, equivalently, of $\zeta_0,\ldots,\zeta_n)$ and its distribution
does not depend upon $n$.

Then we call the process $\Big(X_t, \, t\ge0\Big)$ \emph{regenerative} process.

We call $(R_n, \, t\ge0)$ the \emph{embedded renewal
process} and refer to the $\theta_n$ as \emph{regeneration points} or regeneration times.

The behaviour of the process $X_t$ on the $k^{\scriptsize\mbox{th}} $ cycle \linebreak$\Theta_k \stackrel{{\mathrm{def}}}{=\!\!\!=}  (X_{t+\theta_k},\, t\in[0, \zeta_{k+1}])$ is a random element with the state space $D_0(\mathscr{X})$ of $\mathscr{X}$-valued functions which are right-continuous and have left-hand limits and with finite lifelengths -- see \cite[Chapter 6]{asm}; $\{\Theta_k\}_{{}_{k=0}}^{{}^\infty}$ are i.i.d. random elements.

We call the intervals $(\theta_{t-1},\theta_t)$ the \emph{regeneration periods}, and we call the random variable $\zeta_k$ the \emph{length of the $k^{\scriptsize{th}}$ regeneration period}.

When $t_0$ equals $0$, $\Big(X_t, \, t\ge 0\Big)$ is called a nondelayed (or pure) regenerative process.
Otherwise, the process is called a delayed regenerative process.
\end{definition}

\subsection{Bounds for the convergence rate of the regenerative process.}\label{final} 

This Section is devoted to \emph{Markov} regenerative processes.
We attempt to derive the strong bounds (in the total variation metrics) for the convergence rate of this process in the case when the distribution of the  regeneration period length satisfies  \emph{Key Conditions}.

These bounds can be extended to the case of non-Markov regenerative processes by the following reasons.

\begin{remark}[On non-Markov regenerative processes]

Every arbitrary non-Markov regenerative process $\Big(X_t, \, t\ge0\Big)$ with the state space $(\mathscr{X},\sigma(\mathscr{X}))$ can be extended to the Markov regenerative process $\Big(\overline{X}_t, \, t\ge 0\Big)$ with the extended state space $\left( \overline{ \mathscr{X} }, \sigma \left(\overline{ \mathscr{X} } \right) \right) $.

For instance, we can include in the state $X_t$ for $t \in[ \theta_{n-1} , \theta_n)$ full history of the process $\Big(X_t, \, t\ge 0\Big)$ on the time interval $[ \theta_{n-1} ,t]$ for markovization of non-Markov regenerative process.

The process $ \overline X_t \stackrel {{ \rm \; def} } {= \! \! \!=} \left \{ X_s, s \in[ \theta_{n-1} ,t] |\;t \in[ \theta_{n-1} , \theta_n) \right \} $ is Markov and re\-ge\-ne\-ra\-ti\-ve with the extended state space $\left(\overline{ \mathscr{X} },\sigma \left(\overline{ \mathscr{X} } \right)\right) $.

Denote by $ \overline{ \mathscr{P} } _t^{\; \overline{x}} \left(\overline{M}\right) \stackrel{ \rm{ \;def} } {= \! \! \!=} \mathbf{P} \left\{ \overline{X} _t \in \overline{M} \right\} $ for the process $ \overline{X} _t$ with the initial state $ \overline{X} _0=\overline{x}$ and $\overline{M} \in \sigma \left(\overline{ \mathscr{X} }\right)$.

If $ \mathbf{E} \, \zeta_i< \infty$ then $ \overline{ \mathscr{P} } _t^{ \;\overline{x} } \Longrightarrow \overline{ \mathscr{P} } $, where $\overline{ \mathscr{P} }$ is  some stationary probability measure on the state space $\left(\overline{ \mathscr{X} },\sigma \left(\overline{ \mathscr{X} } \right)\right) $.

If we  prove that $ \left \| \overline{ \mathscr{P} } _t^{\; \overline{x}} - \overline{ \mathscr{P} } \right \|_{TV} \le \psi(t, \overline{x})\big[=\phi(t,\zeta_0)\big]$ for all $t \ge 0$ then this inequality will be true for the original non-Markov regenerative process $X_t$:
$$
\begin{array}{l}
 \left\| { \mathscr{P} } _t^x - { \mathscr{P} } \right\|_{TV} \stackrel{{\mathrm{def}}}{=\!\!\!=}  2 \sup\limits _{M\in\sigma(\mathscr{X})}\left| \mathscr{P} _t^x(M)- \mathscr{P} (M)\right| \le
 \\ \\
\hspace{2.1cm}\le\left\| \overline{ \mathscr{P} } _t - \overline{ \mathscr{P} } \right\|_{TV} \stackrel{{\mathrm{def}}}{=\!\!\!=}  2 \sup\limits _{\overline{M}\in\sigma\left (\overline{\mathscr{X}} \right)}\left|\overline{ \mathscr{P} }_t\left (\overline{M}\right) -\overline{ \mathscr{P} }\left(\overline{M}\right)\right| \le \phi(t, \zeta_0).
\end{array}
$$

Besides, the extension for markovization can be more simple for the queueing non-Markov regenerative process (when considering the structure of this process).
\end{remark}

So, we deal with the regenerative Markov process $\Big(X_t,\, t\ge 0\Big)$ with the state space $(\mathscr{X},\sigma(\mathscr{X}))$; and let $\{\theta_i\}_{{}_{i=0}}^{{}^\infty}$ be its regeneration points.
As  was mentioned before,  $\theta_{i}= \sum\limits _{k=0}^i\zeta_k$ and $\zeta_k\ge 0$; $\{\zeta_i\}_{{}_{i=0}}^{{}^\infty}$ are independent non-negative random variables, and $\{\zeta_i\}_{{}_{i=1}}^{{}^\infty}$ are identically distributed.

Let the renewal process $\Big(R_t,\, t\ge 0\Big)$ with the renewal points $\{\theta_i\}_{{}_{i=0}}^{{}^\infty}$ be an embedded renewal process of the regenerative process $\Big(X_t,\, t\ge 0\Big)$, and let $\Big(N_t,\, t\ge 0\Big)$ be an embedded backward renewal process of the renewal process $\Big(R_t,\, t\ge 0\Big)$.

Once more we denote by $F(s) \stackrel{{\mathrm{def}}}{=\!\!\!=}   \mathbf{P} \{\zeta_i\le s\}$ for $i\ge 1$, and by \linebreak$G(s) \stackrel{{\mathrm{def}}}{=\!\!\!=}   \mathbf{P} \{\zeta_0\le s\}$; we assume that $G(s)=F_r(s)$ -- see Denotation 4, and we suppose that  \emph{Key Condition} is satisfied.

A paired process $\Big(V_t,\, t\ge 0\Big) \stackrel{{\mathrm{def}}}{=\!\!\!=} \Big((X_t,N_t),\, t\ge 0\Big)$ is a Markov re\-ge\-ne\-ra\-ti\-ve process with the state space $ \left( \overline{ \mathscr{X}}, \sigma \left( \overline{\mathscr{X}} \right) \right) \stackrel{{\mathrm{def}}}{=\!\!\!=}  (\mathscr{X}, \sigma(\mathscr{X})) \times(\mathscr{R},\sigma(\mathscr{R}))$, and the components $X_t$ and $N_t$ of the process $\Big(V_t,\, t\ge 0\Big)$ are dependent.
Namely, there exists a conditional distribution
$$
G_a(M) \stackrel{{\mathrm{def}}}{=\!\!\!=}   \mathbf{P} \{X_t\in M|\, N_t=a\}= \mathbf{P} \{X_{\theta_k+a}\in M|\,\theta_k+a\le \theta_{k+1}\},
$$
where $M\in \sigma(\mathscr{X})$.

So, if we know the renewal times of the process $\Big(R_t,\, t\ge 0\Big)$ then it is possible to define the (conditional) distribution of the process $\Big(X_t,\, t\ge 0\Big)$ at any time $\Big($given the values of $\{\theta_i\}_{{}_{i=0}}^{{}^\infty}\Big)$, and we can propose the following method of construction of the successful coupling $\left( \left(W_t,\widetilde{W_t} \right),\, t\ge 0\right)$ for the processes  
$$\Big(V_t,\, t\ge 0\Big)=\Big((X_t,N_t),\, t\ge 0\Big)\mbox{ and } \left(\widetilde{V}_t,\, t\ge 0\right) \stackrel{{\mathrm{def}}}{=\!\!\!=} \left( \left(\widetilde{X}_t,\widetilde{N}_t \right),\, t\ge 0\right).$$

First,  it is possible to construct (on the probability space $(\Omega,\mathscr{F}, \mathbf{P} )$ -- see Denotation 5) the stationary successful coupling for the second parts of the processes $\Big(V_t,\, t\ge 0\Big)$ and $\left(\widetilde{V}_t,\, t\ge 0\right)$, i.e. for the processes $\Big(N_t,\, t\ge 0\Big)$ and $\left(\widetilde{N}_t,\, t\ge 0\right)$ by the schema described in Section \ref{konstr}.
As a result, we obtain the process $\Big(\mathscr{Z}_t,\, t\ge 0\Big)=\left( \left(Z_t,\widetilde{Z}_t\right),\, t\ge 0\right)$.

Then we fix the time $\widetilde{\tau}(r) \stackrel{{\mathrm{def}}}{=\!\!\!=}  \inf\left\{t\ge 0 :\, Z_t=\widetilde{Z}_t\Big| \, N_0=r\right\}$; we do know that $ \mathbf {E} \,\widetilde{\tau}(r)<\infty$ (Lemma \ref{strogiy}); and $\widetilde{\tau}(r)=\theta_j=\vartheta_i$ for some $i$ and $j$.

After that we can complete the process $\Big(Z_t,\, t\ge 0\Big)$ to the process \linebreak$\Big((Y_t,Z_t),\, t\ge 0\Big)$ by construction (on some probability space $(\Omega',\mathscr{F}', \mathbf{P} ')$) the random elements
$$\Theta_0 \stackrel{{\mathrm{def}}}{=\!\!\!=} \{Y_t, \, t\in [0,\theta_0]|\, \theta_0=\zeta_0\} \stackrel { \mathscr{D}}{=} \{X_t, \, t\in [0,\theta_0]|\, \theta_0=\zeta_0\}$$
and for $k=1,2,\ldots j$
$$\Theta_k \stackrel{{\mathrm{def}}}{=\!\!\!=} \{Y_t, \, t\in [\theta_{k-1},\theta_k]|\, \theta_k-\theta_{k-1}=\zeta_k\} \stackrel { \mathscr{D}}{=} \{X_t, \, t\in [\theta_{k-1},\theta_k]|\, \theta_k-\theta_{k-1}=\zeta_k\}.$$

Similarly, it is possible to complete the process $\left(\widetilde{Z}_t,\, t\ge 0\right)$ to the process $ \left( \left(\widetilde{Y}_t,\widetilde{Z}_t \right),\, t\ge 0 \right)$ by construction (on some probability space $(\Omega'',\mathscr{F}'', \mathbf{P} '')$) the random elements
$$
\widetilde{\Theta}_0 \stackrel{{\mathrm{def}}}{=\!\!\!=} \left\{\widetilde{Y}_t, \, t\in [0,\widetilde{\theta}_0]\Big|\, \widetilde{\theta}_0=\widetilde{\zeta}_0\right\} \stackrel { \mathscr{D}}{=} \left\{\widetilde{X}_t, \, t\in [0,\widetilde{\theta}_0] \Big|\, \widetilde{\theta}_0= \widetilde{\zeta}_0 \right\}
$$
and for $k=1,2,\ldots i$
$$
\begin{array}{l}
 \widetilde{\Theta}_k \stackrel{{\mathrm{def}}}{=\!\!\!=} \{\widetilde{Y}_t, \, t\in \left[\widetilde{\theta}_{k-1},\widetilde{\theta}_k \right]\Big|\, \widetilde{\theta}_k- \widetilde{\theta}_{k-1}= \widetilde{\zeta}_k\}  \stackrel { \mathscr{D}}{=}
 \\
 \hspace{5cm} \stackrel { \mathscr{D}}{=}\left\{\widetilde{X}_t, \, t\in \left[\widetilde{\theta}_{k-1},\widetilde{\theta}_k \right]\Big |\, \widetilde{\theta}_k-\widetilde{\theta}_{k-1}= \widetilde{\zeta}_k\right\}.
\end{array}
$$

At the time $\widetilde{\tau}(r)=\theta_j=\vartheta_i$ the processes $\left(Z_t,\, t\ge 0\right)$ and $\left(\widetilde{Z}_t,\, t\ge 0\right)$ have the same distribution (or even coincide in particular cases).

After the time $\widetilde{\tau}(r)=\theta_j$ we can again construct (on some probability space $(\Omega''',\mathscr{F}''', \mathbf{P} ''')$) the random elements
$$
{\Theta}_k \stackrel{{\mathrm{def}}}{=\!\!\!=} \{{Y}_t, \, t\in \left[{\theta}_{k-1},{\theta}_k \right]|\, {\theta}_k- {\theta}_{k-1}= {\zeta}_k\}  \stackrel { \mathscr{D}}{=} \{{X}_t, \, t\in \left[{\theta}_{k-1},{\theta}_k \right]|\, \theta_k-\theta_{k-1}=\zeta_k\}
$$
for $k>j$, and we put $\widetilde{\Theta}_{i+\ell} \stackrel{{\mathrm{def}}}{=\!\!\!=}  \Theta_{j+\ell}$, $\ell>0$.

Analogously, it is possible to construct the process
$$
\left( \left(W_t,\widetilde{W}_t \right),\, t\ge 0\right)=\left(\left(Y_t,Z_t,\widetilde{Y}_t,\widetilde{Z}_t\right),\, t\ge 0\right) \stackrel { \mathscr{D}}{=} \left( \left(X_t, N_t,\widetilde{X}_t, \widetilde{N}_t\right), \, t\ge 0\right),
 $$
on the probability space
 $$
\left( \widetilde{\Omega}, \widetilde{\mathscr{F}}, \widetilde{ \mathbf{P} }\right)  \stackrel{{\mathrm{def}}}{=\!\!\!=} (\Omega,\mathscr{F}, \mathbf{P} ) \times (\Omega',\mathscr{F}', \mathbf{P} ') \times (\Omega'',\mathscr{F}'', \mathbf{P} ''') \times (\Omega''',\mathscr{F}''', \mathbf{P} '''),
$$
and this process is a successful coupling for the processes
\linebreak
$\Big(V_t,\, t\ge 0\Big)=\Big((X_t,N_t),\, t\ge 0\Big)$ and $\left(\widetilde{V}_t,\, t\ge 0\right)=\left(\left (\widetilde{X}_t,\widetilde{N}_t \right),\, t\ge 0 \right)$ -- here  and hereafter we  omit a detailed description of the construction of this process.

Now, from  Corollary \ref{corpoly} and Corollary \ref{corexp} we deduce:

\begin{theorem}
Let $\Big(X_t,\, t\ge 0\Big)$ be a Markov regenerative process with the state space $(\mathscr{X},\sigma(\mathscr{X}))$ which satisfies  Key Condition, with the initial state $X_0=x$, and suppose that $\mu_{0,K} \stackrel{{\mathrm{def}}}{=\!\!\!=}  \mathbf {E} (\zeta_0)^K<\infty$, $\mu_{K} \stackrel{{\mathrm{def}}}{=\!\!\!=}  \mathbf {E} (\zeta_1)^K<\infty$ for some $K\ge 1$.

Then for all $t\ge 0$ and every $k\in[1,K]$  we derive an estimate
$$
\left\| \mathscr{P} _t^x- \mathscr{P} \right\|_{TV}\le 2 \, \widehat{C}(k,\zeta_0)t^{-k},
$$
where $ \mathscr{P} _t^r(M) \stackrel{{\mathrm{def}}}{=\!\!\!=}   \mathbf{P} \{X_t\in M|X_0=x\}$, $ \mathscr{P} (M) \stackrel{{\mathrm{def}}}{=\!\!\!=}  \lim\limits _{t\to \infty}  \mathscr{P} _t^x(M)$, $M\in \sigma(\mathscr{X})$, and $\zeta_0$ is a first regeneration point of the process $\Big(X_t,\, t\ge 0\Big)$.
\end{theorem}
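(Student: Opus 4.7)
The plan is to reduce the claim to the backward renewal case (Corollary~\ref{corpoly}) via the coupling $\Big((W_t,\widetilde{W}_t),\,t\ge 0\Big)=\Big((Y_t,Z_t,\widetilde{Y}_t,\widetilde{Z}_t),\,t\ge 0\Big)$ constructed in Section~\ref{final}. The crucial structural observation is that this coupling inherits its coupling epoch from its second coordinate: by construction the regeneration points of $(Y_t,Z_t)$ coincide with the jump points $\theta_i$ of $(Z_t)$ and those of $(\widetilde{Y}_t,\widetilde{Z}_t)$ coincide with $\widetilde{\theta}_j$, and the time $\widetilde{\tau}(r)=\theta_j=\vartheta_i$ is by its very definition a common regeneration point. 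After this common regeneration, the paired post-coupling cycles $\Theta_{j+\ell}$ are reused for both components ($\widetilde{\Theta}_{i+\ell}\stackrel{{\mathrm{def}}}{=\!\!\!=} \Theta_{j+\ell}$), so $W_t=\widetilde{W}_t$ for all $t\ge \widetilde{\tau}(r)$.

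Next, I verify the marginal distributions. By Lemma~\ref{sovprasp} we have $Z_t\stackrel{\mathscr{D}}{=}N_t$ and $\widetilde{Z}_t\stackrel{\mathscr{D}}{=}\widetilde{N}_t$, and by the way $\{\Theta_k\}$ and $\{\widetilde{\Theta}_k\}$ are constructed---each $\Theta_k$ conditional on its cycle length $\zeta_k$ has the law of an $(X_t)$-cycle, and similarly for $\widetilde{\Theta}_k$---the component $(Y_t,Z_t)$ has the same finite-dimensional distributions as $(X_t,N_t)$, and $(\widetilde{Y}_t,\widetilde{Z}_t)$ has the same distributions as the stationary version $\Big(\widetilde{X}_t,\widetilde{N}_t\Big)$. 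In particular $\widetilde{Y}_t$ is stationary, so $\mathbf{P}\{\widetilde{Y}_t\in M\}=\mathscr{P}(M)$ for every $M\in\sigma(\mathscr{X})$.

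With these two facts the coupling inequality (in the form \eqref{zvlab3}) gives, for any $M\in\sigma(\mathscr{X})$,
$$
\left|\mathscr{P}_t^x(M)-\mathscr{P}(M)\right|
= \Big|\mathbf{P}\{Y_t\in M\}-\mathbf{P}\{\widetilde{Y}_t\in M\}\Big|
\le \mathbf{P}\{\widetilde{\tau}(r)>t\}.
$$
Applying Markov's inequality to the $k$-th moment together with Lemma~\ref{poly} yields
$$
\mathbf{P}\{\widetilde{\tau}(r)>t\}\le t^{-k}\,\mathbf{E}(\widetilde{\tau}(r))^k\le \widehat{C}(k,\zeta_0)\,t^{-k},
$$
for every $k\in[1,K]$, and taking the supremum over $M$ and multiplying by $2$ gives the claimed bound $\|\mathscr{P}_t^x-\mathscr{P}\|_{TV}\le 2\,\widehat{C}(k,\zeta_0)\,t^{-k}$.

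The only delicate point in this argument is the first paragraph, where I have to be careful that the ``paste-and-reuse'' construction of $\{\Theta_k\}$, $\{\widetilde{\Theta}_k\}$ on the product probability space does not distort either marginal. The independence of cycles from their lengths (which is exactly the regenerative property for the Markov process $(X_t)$) and the fact that on $(\Omega,\mathscr{F},\mathbf{P})$ the sequences $\{\zeta_i\}$ and $\{\widetilde{\zeta}_j\}$ driving the coupling $(Z_t,\widetilde{Z}_t)$ have the correct laws (Lemma~\ref{sovprasp}) together guarantee this; after that point, everything reduces to Lemma~\ref{poly} and Markov's inequality, which are routine. $\bullet$
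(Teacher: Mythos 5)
Your proposal is correct and takes essentially the same route as the paper: Section~\ref{final} constructs exactly the coupling $\bigl((Y_t,Z_t,\widetilde Y_t,\widetilde Z_t),\,t\ge 0\bigr)$ you describe, observes that its coupling epoch is inherited from the $(Z_t,\widetilde Z_t)$ component, and then declares the theorem to follow ``from Corollary~\ref{corpoly}.'' You have simply spelled out the deduction (the coupling inequality, Markov's inequality, and Lemma~\ref{poly}) that the paper leaves implicit, and your careful remark about the marginal laws of $(Y_t,Z_t)$ and $(\widetilde Y_t,\widetilde Z_t)$ being preserved by the paste-and-reuse construction on the product space is exactly the point the paper relies on without explicit justification.
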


\begin{theorem}
Let $\Big(X_t,\, t\ge 0\Big)$ be a Markov regenerative process with the state space $(\mathscr{X},\sigma(\mathscr{X}))$ which satisfies  Key Condition, with the initial state $X_0=x$, and let $ \mathbf {E} \,e^{a\zeta_0}=\varepsilon_{0,a}<\infty$, $ \mathbf {E} \,e^{a\zeta_1}=\varepsilon_{a}<\infty$ for some $a> 0$.

Then for all $t\ge 0$ and for all $\beta>0$ such that $\widetilde{\varepsilon}_\beta<1$  we have
$$
\| \mathscr{P} _t^x- \mathscr{P} \|_{TV}\le 2\, \widehat{\mathscr{C}}(\beta,\zeta_0)e^{-\beta t}.
$$
\end{theorem}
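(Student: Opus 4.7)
The plan is to reduce the exponential-rate bound for the regenerative process to the exponential-rate bound for its embedded backward renewal process, which is already provided by Lemma \ref{exp} / Corollary \ref{corexp}.

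First I would carry out the construction of the paired process $\left((W_t,\widetilde{W}_t),\,t\ge 0\right)$ described at the end of Section \ref{final}: build the stationary successful coupling $\left(\mathscr{Z}_t,\,t\ge 0\right)=\left((Z_t,\widetilde{Z}_t),\,t\ge 0\right)$ for the embedded backward renewal processes $(N_t)$ and $(\widetilde N_t)$ exactly as in Section \ref{konstr}, and then on an enlarged probability space sample the regenerative ``cycle trajectories'' $\Theta_k$ and $\widetilde\Theta_k$ conditioned on the already constructed interrenewal times $\zeta_k,\widetilde\zeta_k$, with the further identification $\widetilde\Theta_{i+\ell}\stackrel{\rm def}{=}\Theta_{j+\ell}$ from the first coincidence epoch onward. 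The point to verify is that the resulting $W_t=(Y_t,Z_t)$ and $\widetilde W_t=(\widetilde Y_t,\widetilde Z_t)$ have, marginally, the same distributions as $V_t=(X_t,N_t)$ and $\widetilde V_t=(\widetilde X_t,\widetilde N_t)$ respectively, and that after the coupling epoch $\widetilde\tau(r)$ of the backward renewal coupling the two components coincide pathwise.

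The key observation then is that the coupling epoch for $(W_t,\widetilde W_t)$ is at most $\widetilde\tau(r)$ defined in Section \ref{konstr}, because once both the backward renewal component and the current regeneration cycle agree at a renewal time $\theta_j=\widetilde\theta_{j'}$, the conditional dynamics of $Y$ and $\widetilde Y$ are driven by the same cycle and therefore by construction coincide afterward. Consequently the coupling inequality gives
$$
\left\|\mathscr{P}^x_t-\mathscr{P}\right\|_{TV}\le 2\,\mathbf{P}\left\{\widetilde\tau(r)>t\right\}.
$$

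Next I would apply Lemma \ref{exp} to the backward renewal process generated by $\{\zeta_i\}$: under the hypotheses $\varepsilon_{0,a}<\infty$, $\varepsilon_a<\infty$ and $\widetilde\varepsilon_\beta<1$, that lemma yields $\mathbf{E}\,e^{\beta\widetilde\tau(r)}\le\widehat{\mathscr{C}}(\beta,\zeta_0)=\varepsilon_{0,\beta}\varepsilon_\beta/(1-\widetilde\varepsilon_\beta)$. Combining with the coupling bound above and Markov's inequality in the form (\ref{lab1})--(\ref{zvlab3}) with $\varphi(t)=e^{\beta t}$ gives
$$
\left\|\mathscr{P}^x_t-\mathscr{P}\right\|_{TV}\le 2\,e^{-\beta t}\,\mathbf{E}\,e^{\beta\widetilde\tau(r)}\le 2\,\widehat{\mathscr{C}}(\beta,\zeta_0)\,e^{-\beta t},
$$
which is the claimed estimate.

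The main obstacle is not the analytic bound (Lemma \ref{exp} does the heavy lifting) but the measure-theoretic bookkeeping in the coupling construction: one must carefully check that the independent samplings of the cycle trajectories $\Theta_k$, conditioned on the lengths $\zeta_k$ already produced by the renewal coupling, actually reproduce the correct marginal laws of $X_t$ and $\widetilde X_t$ -- this uses the Markov regenerative property together with the fact that, conditional on $\{\zeta_i\}$, the $\Theta_k$'s are independent random elements with a distribution depending only on $\zeta_k$. Once this is verified, the exponential-rate statement follows in one line, just as Corollaries \ref{corpoly}--\ref{corexp} follow from the corresponding lemmata for the pure backward renewal process.
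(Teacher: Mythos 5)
Your proposal is correct and follows essentially the same route as the paper: build the regenerative coupling $\bigl((W_t,\widetilde W_t),\,t\ge 0\bigr)$ on top of the backward-renewal stationary coupling from Section \ref{konstr} exactly as sketched at the end of Section \ref{final}, observe that its coupling epoch is dominated by $\widetilde\tau(r)$, and invoke the bound on $\mathbf{E}\,e^{\beta\widetilde\tau(r)}$ from Lemma \ref{exp} / Corollary \ref{corexp} together with the coupling and Markov inequalities. This matches the paper's (terse) proof, which simply deduces the theorem from Corollary \ref{corexp} after the same coupling construction.
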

\begin{remark}
Furthermore, it is possible to obtain a certain decrease in value of the constants $\widehat{C}(k,\zeta_0)$ and $\widehat{\mathscr{C}}(\beta,\zeta_0)$  using the properties of the cumulative distribution function $F(s)$ and  determining more  accurately  the estimates in the calculations (\ref{jen}) end (\ref{ocexp}).
\end{remark}

\subsection{ Application to the queueing theory.}
The distribution of the period of the queueing regenerative process $(Q_t,\,t\ge 0)$ is often unknown in the queuing theory.
However, the regeneration period can be often split into two parts, in most cases we call them a busy period and an idle period.
Furthermore, as a rule the idle period has a known non-discrete distribution.
We suppose that the bounds for the moments of the busy period are also known.
This queueing process has an \emph{embedded alternating renewal process}, and it turns out not to be Markov, although the backward alternating renewal process defined similarly to the backward renewal process is Markov -- see Definition \ref{proback}.
So, in this situation the queueing regenerative process has an \emph{embedded backward alternating renewal process} $(A_t,\,t\ge 0)$.

The stationary coupling method can be applied to the backward al\-ter\-na\-ting renewal process $(A_t,\,t\ge 0)$ in the case when one of its alternating renewal periods has the cumulative distribution function   which satisfies  \emph{Key Condition}, and all alternating renewal periods have finite expectation.
Mo\-re\-o\-ver, it is possible to use the stationary coupling method for to the backward alternating renewal process in  some cases when the alternating periods of one regenerative period are dependent.
The description of the stationary coupling method for the backward alternating process will be presented in the next publications.

So, we can find the bounds for the convergence of the backward renewal process $(A_t,\,t\ge 0)$ by using the stationary coupling method.
Then, using the argumentation of Section \ref{final}, we can verify that the bounds for the process $(A_t,\,t\ge 0)$ appear to be true for a complete process $((Q_t,A_t),\, t\ge 0)$ and for the process $(Q_t,\,t\ge 0)$.

Besides, if the bounds of moments of a busy period are also known, we can apply our construction to embedded alternating renewal process after some modification.

\paragraph{{acknowledgements}}
The author is grateful to L.~G.~Afanasyeva and \linebreak A.~Yu.~Ve\-re\-te\-n\-ni\-kov for detailed discussions and comments,  to H.~Thorisson for useful advices and remarks, and to M.~K.~Turtsynsky for the great help.


\begin{thebibliography}{}
%
%

\bibitem{asm} Asmussen, S., Applied Probability and Queues, 2nd ed. Springer (2003)
\bibitem{bor} Borovkov, A.A., Stochastic processes in queueing theory, 291 pages. Nauka (1972), Springer-Verlag (1976)
 \bibitem{zvbib1} Doeblin,W., Expos\'{e}e de la th\'{e}orie des cha\^{\i}nes simple constantes de Markov \`a un nombre fini d'\'etats. Rev. Math. Union Interbalcan, (2), 77--105 (1938)
 \bibitem{Grif} Griffeath, D., A maximal coupling for Markov chains, Zeitschrift f\"ur Wahr\-schein\-lich\-keit\-s\-theo\-rie und Verwandte Gebiete. Volume 31, Issue 2, 95--106 (1975)
 \bibitem{zvbib5} Kalashnikov, V.V., Topics on Regenerative Processes. CRC Press (1994)
 \bibitem{zvbib6} Lindvall, T., Lectures on the Coupling Method. Wiley, New York (1992)
 \bibitem{thor} Thorisson, H., Coupling, Stationarity, and Regeneration, Springer (2000)
 \bibitem{zvbib13} Veretennikov, A.Yu., On the rate of convergence to the stationary distribution in the single-server queuing system, Autom. Remote Control 74(10), 1620--1629 (2013).
 \bibitem{zvbib14} Veretennikov, A.Yu., On the rate of convergence for infinite server Erlang-Sevastyanov's problem, Queueing Systems, Volume 76, Issue 2, 181--203 (2014)
 \bibitem{zvbib15} Veretennikov, A.Yu., Zverkina, G.A., Simple Proof of Dynkin's Formula for Single-Server Systems and Polynomial Convergence Rates, Markov Processes And Related Fields, v.20, No3, 479--504 (2014)
 \bibitem{zvbib16} Veretennikov, A., Zverkina, G., On Polynomial Bounds of Convergence for the Availability Factor, Distributed Computer and Communication Networks. Volume 601 of the series Communications in Computer and Information Science, pp. 358--369 (2016)
 \end{thebibliography}
\end{document}